\theoremstyle{plain}
\newtheorem{thm}{Theorem}[section]
\newtheorem{prop}[thm]{Proposition}
\newtheorem{lem}[thm]{Lemma}
\newtheorem{cor}[thm]{Corollary}
\theoremstyle{definition}
\newtheorem{rem}[thm]{Remark}
\def\l{\left}
\def\r{\right}
\def\lang{\l\langle}
\def\rang{\r\rangle}
\def\as{\!\mathrel{\mathop:}=}
\def\fun{\col X\to\exrls}
\def\wgt{\col [0,\infty)\to[0,\infty)}
\def\mdl{\col (0,\infty)\to(0,\infty)}
\def\map{\col X\to X}
\def\nat{\mathbb{N}}
\def\rls{\mathbb{R}}
\def\exrls{(-\infty,\infty]}
\def\eps{\varepsilon}
\def\lam{\lambda}
\def\half{\frac{1}{2}}
\newcommand{\hlf}[1]{\frac{#1}{2}}
\newcommand{\rec}[1]{\frac{1}{#1}} 
\newcommand{\nnorm}[1]{{\left\vert\kern-0.25ex\left\vert\kern-0.25ex\left\vert #1 
    \right\vert\kern-0.25ex\right\vert\kern-0.25ex\right\vert}}
\def\prox{\mathrm{prox}}
\def\aprox{\widehat{\mathrm{prox}}}
\def\col{\colon}
\def\ol{\overline}
\def\argmin{\operatornamewithlimits{\arg\min}}
\def\dom{\operatorname{dom}}
\def\ch{\operatorname{Ch}}
\def\cldom{\operatorname{\ol{dom}}}
\def\di{\operatorname{d}\!}
\begin{document}
\title[Proximal mappings with Young functions]{On proximal mappings with Young functions in uniformly convex Banach spaces}

\date{\today}
\subjclass[2010]{Primary: 46T20. Secondary: 46B20.}
\keywords{Convex function, Duality mapping, modulus of uniform convexity, proximal mapping, uniformly convex Banach space, uniformly convex function, Young function.}
\thanks{}

\author[M. Ba\v{c}\'ak]{Miroslav Ba\v{c}\'ak}
\address{Max Planck Institute for Mathematics in the Sciences, Inselstr. 22, 04 103 Leipzig, Germany}
\email{bacak@mis.mpg.de}

\author[U. Kohlenbach]{Ulrich Kohlenbach}
\address{Department of Mathematics, Technische Universit\"{a}t Darmstadt, Schlossgartenstra{\ss}e 7, 64289 Darmstadt, Germany}
\email{kohlenbach@mathematik.tu-darmstadt.de}

\begin{abstract}
It is well known in convex analysis that proximal mappings on Hilbert spaces are $1$-Lipschitz. In the present paper we show that proximal mappings on uniformly convex Banach spaces are uniformly continuous on bounded sets. Moreover, we introduce a new general proximal mapping whose regularization term is given as a composition of a Young function and the norm, and formulate our results at this level of generality. It is our aim to obtain the corresponding modulus of uniform continuity explicitly in terms of a modulus of uniform convexity of the norm and of moduli witnessing properties of the Young function. We also derive several quantitative results on uniform convexity, which may be of interest on their own. 
\end{abstract}

\maketitle
\section{Introduction}

Let $X$ be a Hilbert space and $f\fun$ be a (proper) convex lower semicontinuous function. The proximal mapping associated with~$f$ is given by
\begin{equation*}
 \prox_f(x)\as\argmin_{y\in X} \l[f(y)+\half\|x-y\|^2  \r], \quad x\in X.
\end{equation*}
This definition is due to Moreau \cite{moreau62,moreau63,moreau65}. Given $\lam>0,$ it is common to consider
\begin{equation} \label{eq:pm2}
 \prox_{\lam f}(x)=\argmin_{y\in X} \l[f(y)+\rec{2\lam}\|x-y\|^2  \r],
\end{equation}
which is exactly the resolvent of the maximal monotone operator $\partial f,$ that is, 
\begin{equation*}
 \prox_{\lam f}(x)=\l(I+\lam\partial f\r)^{-1}(x),
\end{equation*}
for every $x\in X.$ The proximal mapping is nonexpansive, that is, Lipschitz continuous with Lipschitz constant~$1.$ A prime example of the proximal mapping is the metric projection onto a closed convex set $C\subset~X$ and it is known that the nonexpansiveness of metric projections onto closed convex sets characterizes Hilbert spaces among Banach spaces provided the dimension is at least three; this result is due to Phelps \cite[Theorem 5.2]{phelps57}. More details on the proximal mapping in Hilbert spaces can be found for instance in the monograph by Bauschke and Combettes~\cite{bc}. We also recommend the classic books \cite{brezis} and \cite{phelps} by Brezis and Phelps, respectively.

Let now $X$ be a uniformly convex Banach space. Given a closed convex set $C\subset X,$ we use the standard notation $P_C$ and $d_C$ for the metric projection and distance function, respectively. Let also $B(x,r)$ stand for the closed ball with diameter $r>0$ centered at $x\in X.$ In particular, we denote $B_X\as B(0,1).$ It is well known that~$P_C$ is continuous; see for instance the book by Goebel and Reich \cite[Proposition 3.2]{gr}. The following quantitative result is from the monograph by Benyamini and Lindenstrauss \cite[Lemma 2.5]{bl}. Assume $C\subset X$ is convex closed, $x\in X$ with $d_C(x)<R,$ for some $R>0,$ and $y\in B(x,r),$ for some $r\in(0,R).$ Then we have
\begin{equation*}
 \l\|P_C(x)-P_C(y) \r\|\le \l(R+r\r) \delta_X^{-1}\l(\frac{2r}{R+r} \r).
\end{equation*}
Here $\delta_X$ stands for a modulus of uniform convexity of~$X;$ see Section~\ref{sec:pre} for the definition.

Penot \cite[Theorem 4.5]{penot} obtained a similar result which we will now present. Let us establish the relevant terminology first.

A continuous strictly convex function $\Phi\wgt$ is a \emph{Young function} if it satisfies
\begin{equation} \label{eq:young}
 \lim_{t\to0}\frac{\Phi(t)}{t} =0, \quad\text{and}\quad \lim_{t\to\infty}\frac{\Phi(t)}{t}  = \infty.
\end{equation}
Then there exists a right-continuous strictly increasing function $\phi\wgt$ with $\phi(0)=0$ and $\lim_{t\to\infty}\phi(t)=\infty$ such that
\begin{equation*}
 \Phi(t)\as\int_0^t \phi(s) \di s.
\end{equation*}
The class of Young functions include $\rec{p}t^p,$ for $p\ge2,$ $e^t-t-1,$ $\cosh(t)-1,$ $t^p\log\l(t+1\r)^r,$ for $p\ge1$ and $r>0.$ For more details, see for instance the classic book by Krasnoselski and Ruticki~\cite{kr}.

Consider a duality mapping $J_\phi\col X\to 2^{X^*},$ given by
\begin{equation} \label{eq:dualitymapping}
 J_\phi(x)\as \l\{x^*\in X^*\col \phi(\|x\|)=\l\|x^*\r\|, \lang x^*,x\rang=\|x\|\l\|x^*\r\| \r\}.
\end{equation}
Recall that the concept of a duality mapping goes back to Beurling and Livingston~\cite{beurlinglivingston}.

We can now state the promised Penot's theorem.
\begin{thm}[Penot]
 Let $X$ be a uniformly convex Banach space and $C\subset X$ be a closed convex set. Let $R>d(0,C)$ and assume that there exists a~nondecreasing function $\gamma\wgt$ satisfying $\gamma(t)>0,$ for each $t>0,$ such that
 \begin{equation*}
  \lang x^*-y^*,x-y\rang \ge \gamma\l(\|x-y\| \r),
 \end{equation*}
for every $x,y\in B(0,R)$ and $x^*\in J_\phi(x)$ and $y^*\in J_\phi(y).$ Then for every $r>0$ with $3r+d(0,C)<R$ we have that the metric projection $P_C$ is uniformly continuous on $B(0,r)$ with 
\begin{equation*}
 \l\| P_C(x)-P_C(y) \r\| \le \gamma^{-1}\l(2\phi(R)\|x-y\|  \r) + \|x-y\|,
\end{equation*}
for every $x,y$ in $B(0,r).$
\end{thm}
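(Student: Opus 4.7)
The plan is to combine the variational characterization of the metric projection via the duality map $J_\phi$ with the uniform monotonicity hypothesis. The starting point is the standard fact that in a reflexive Banach space, $p=P_C(x)$ iff there exists $x^*\in J_\phi(x-p)$ with $\langle x^*,p-c\rangle\ge 0$ for every $c\in C$. This is obtained by writing $P_C(x)=\argmin_{c\in C}\|x-c\|$, applying the subdifferential sum rule to $\|x-\cdot\|$ plus the indicator of $C$, and invoking the identification $J_\phi(z)=\phi(\|z\|)\,\partial\|\cdot\|(z)$ for $z\ne 0$.

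Fix $x,y\in B(0,r)$ and set $p\as P_C(x)$, $q\as P_C(y)$. Before using the hypothesis I would verify that $x-p,y-q\in B(0,R)$: indeed,
\[
\|x-p\|=d_C(x)\le \|x\|+d(0,C)\le r+d(0,C)<R,
\]
and likewise $\|y-q\|<R$.

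Pick $x^*\in J_\phi(x-p)$ and $y^*\in J_\phi(y-q)$ supplied by the characterization. Testing against $c=q$ in the first and $c=p$ in the second gives $\langle x^*,p-q\rangle\ge 0$ and $\langle y^*,q-p\rangle\ge 0$, hence
\[
\langle x^*-y^*,p-q\rangle\ge 0.
\]
Applying the monotonicity hypothesis to the admissible pair $x-p,y-q\in B(0,R)$ yields
\[
\langle x^*-y^*,(x-p)-(y-q)\rangle \ge \gamma\bigl(\|(x-y)-(p-q)\|\bigr).
\]
Expanding the left-hand side and dropping the nonnegative quantity $\langle x^*-y^*,p-q\rangle$ leaves
\[
\langle x^*-y^*,x-y\rangle \ge \gamma\bigl(\|(x-y)-(p-q)\|\bigr).
\]
Since $\|x^*\|=\phi(\|x-p\|)\le \phi(R)$ and similarly $\|y^*\|\le\phi(R)$, the left-hand side is bounded above by $2\phi(R)\|x-y\|$. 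Inverting $\gamma$ and applying $\|p-q\|\le \|(p-q)-(x-y)\|+\|x-y\|$ delivers the stated estimate.

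I expect the main obstacle to be not the short algebraic chain above but the very first step: since $X$ is only assumed uniformly convex (and not necessarily G\^ateaux smooth), the duality map $J_\phi$ may be multi-valued, so one has to justify both the subdifferential sum rule for $\|x-\cdot\|$ plus the indicator of $C$ and the identity $J_\phi(z)=\phi(\|z\|)\,\partial\|\cdot\|(z)$ for $z\ne 0$. A secondary minor point is that $\gamma$ is merely nondecreasing, so $\gamma^{-1}$ must be read as the generalized right-inverse $\gamma^{-1}(s)\as\sup\{t\ge 0:\gamma(t)\le s\}$, which still satisfies the implication $\gamma(t)\le s\Rightarrow t\le \gamma^{-1}(s)$ used at the final step.
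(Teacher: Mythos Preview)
The paper does not actually prove this theorem: it is quoted in the Introduction as a result of Penot \cite[Theorem 4.5]{penot}, with no proof supplied. So there is no ``paper's own proof'' to compare against.

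That said, your argument is correct and is essentially the standard one. The variational characterization $p=P_C(x)\Leftrightarrow \exists\,x^*\in J_\phi(x-p)$ with $\langle x^*,p-c\rangle\ge 0$ for all $c\in C$ is exactly the content of Lemma~\ref{lem:proxchar} in the paper (specialized to $f=\iota_C$ and $\lambda=1$), so your ``main obstacle'' is already handled there via Asplund's identification $\partial(\Phi\circ\|\cdot\|)=J_\phi$. The remaining chain---adding the two variational inequalities to obtain $\langle x^*-y^*,p-q\rangle\ge 0$, invoking the uniform monotonicity hypothesis on $x-p,y-q\in B(0,R)$, bounding $\|x^*-y^*\|\le 2\phi(R)$, and finishing with the triangle inequality---is clean and correct. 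Your remark about reading $\gamma^{-1}$ as the generalized right inverse is the right way to handle a merely nondecreasing $\gamma$.

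One small observation: your proof only uses $r+d(0,C)<R$ to place $x-p$ and $y-q$ in $B(0,R)$, not the full assumption $3r+d(0,C)<R$. The stronger hypothesis in Penot's statement is therefore not needed for the argument you give; presumably it reflects a different bookkeeping choice in the original.
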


In the present paper, we investigate proximal mappings in uniformly convex Banach spaces associated to a (proper) convex lower semicontinuous (lsc, for short) function $f\fun.$ As a matter of fact we introduce a new type of proximal mappings, whose regularization term is given as a composition of a~Young function and the norm: 
\begin{equation} \label{eq:pmy}
 \prox_{\lam,f}^{\Phi}(x)\as\argmin_{y\in X} \l[f(y)+\rec{\phi(\lam)}\Phi\l(\|x-y\| \r) \r],\quad x\in X,
\end{equation}
where $\lam>0.$ For $\Phi(t)\as \rec{p}t^p,$ where $p\ge2,$ we recover
\begin{equation} \label{eq:ppm} 
 \prox_{\lam,f}^{\Phi}(x)=\argmin_{y\in X} \l[f(y)+\rec{p\lam^{p-1}}\|x-y\|^p \r],
\end{equation}
which was used, for instance, by Ambrosio, Gigli and Savar\'e in~\cite{ags} to construct gradient flows ($p$-curves) in metric spaces; see \cite[Remark 2.0.7]{ags}. The proximal mapping~\eqref{eq:ppm} was also used in $p$-uniformly convex metric spaces by Kuwae~\cite{kuwae} as a tool in the study of $p$-harmonic mappings. A corresponding definition of $p$-Moreau envelopes (infimal convolutions) in Hilbert spaces can be found in \cite[Proposition 12.15]{bc}.

Let us turn back to duality mappings now. In the simplest case, when $\phi(t)\as t,$ one obtains the \emph{normalized} duality mapping $J\col X\to 2^{X^*}$ given by
\begin{equation*}
 J(x)\as \l\{x^*\in X^*\col \|x\|=\l\|x^*\r\|, \lang x^*,x\rang=\|x\|\l\|x^*\r\| \r\}.
\end{equation*}
However, in some cases, the duality mapping associated to the function $\phi(t)\as t^{p-1}$ may reflect better the geometry of the underlying Banach space than the normalized duality mapping. Indeed, Penot and Ratsimahalo~\cite{pr} show that if $X\as L_p(\Omega),$ for some $p\in(1,\infty),$ then the duality mapping $J_\phi$ with $\phi(t)\as t^{p-1}$ has a simple form
\begin{equation*}
 J_\phi(x)(\omega) = \l|x(\omega) \r|^{p-2} x(\omega),\qquad \text{a.e. }\omega\in\Omega,
\end{equation*}
which, unlike the normalized duality mapping, does not involve integration. The general duality mapping~\eqref{eq:dualitymapping} is then believed to be more suitable for other Banach spaces (e.g. Orlicz spaces) and so is the proximal mapping with a Young function~\eqref{eq:pmy}. One can also extend definition~\eqref{eq:pmy} to nonlinear spaces, which may be natural in metric space generalizations of Orlicz spaces \cite{kell14,sturm} and in minimization of functionals like the generalized Cheeger energy
\begin{equation*}
 \ch_\Phi(g)\as\int \Phi\l(\l|\nabla g \r|\r)\di\mu,
\end{equation*}
introduced along with the corresponding generalized Laplacian $\Delta_\Phi$ by Kell~\cite{kell16}.

To our knowledge, the continuity of proximal mappings in uniformly convex Banach spaces has not been addressed in the literature 
(apart from the case of metric projections mentioned above) and our results are new even for the case $\phi(t)\as t.$
We also provide explicit moduli of uniform continuity of the proximal mapping depending on a modulus of uniform 
convexity of the underlying space~$X$ and on moduli witnessing properties of the function~$\Phi.$ On the other hand, remarkably, this modulus of uniform continuity of $\prox_{\lam,f}^{\Phi}$ is \emph{independent} of $\lam$ for $\lam\in(0,1].$ Furthermore, we establish several quantitative results on uniform convexity which may be of interest on their own. In some cases, their non-quantitative versions had existed and we obtained our results by extracting additional 
information from the original proofs. This approach is part of a general program of obtaining statements 
with explicit effective bounds by applying proof-theoretic methods developed by the second author. 
However, in the present paper, we do not discuss the underlying principles from logic and proof theory and 
instead refer the interested reader to \cite{kohlenbach16,kohlenbach05,kohlenbach-book} for more information. 
One of the consequences of our methodology is that we work with \emph{nonoptimal} moduli. For instance, it is 
common in functional analysis to define {\bf the} (optimal) modulus of uniform convexity of a Banach space by 
$\delta_X(\eps)\as \inf\l\{1-\l\|\hlf{x+y}\r\|\col x,y\in B_X,\|x-y\|\ge\eps \r\},$ whereas we prefer 
to call an \emph{arbitrary} function~$\delta_X$ witnessing  
\begin{equation*}
\forall \eps\in (0,2]\ \exists\delta >0 \ A(\eps,\delta) 
\end{equation*}
(i.e. any so-called `Skolem function' for this property), 
where 
\begin{equation*}
 A(\eps,\delta)\as\forall x,y\in B_X \ \l(\l\| (x+y)/2\r\| >1-\delta\to \| x-y\| <\eps\r),
\end{equation*}
{\bf a} modulus of uniform convexity; see Section~\ref{sec:pre}. This is closer to the spirit of computable 
analysis, where $\eps,\delta>0$ are taken as dyadic rational numbers $2^{-k},2^{-n}$ and then 
moduli are number-theoretic functions $\delta_X\col\nat\to\nat$ providing an explicit numerical 
witness for the positivity of an optimal modulus. Such `nonoptimal' moduli are usually easy to 
compute whereas the optimal ones might not be computable. Also e.g. the nonoptimal (but asymptotically 
optimal) modulus of uniform convexity $\frac{\eps^2}8$ for Hilbert spaces 
has a better multiplicative behavior w.r.t. $\eps$ than 
the optimal one. Although, when using arbitrary moduli, one in general no longer can rely on properties of the 
optimal modulus such as monotonicity, this does not cause a real problem as one can use 
instead the monotonicity of the property $A(\eps,\delta):$
\begin{equation*}
\eps_1\le \eps_2\wedge \delta_1\ge \delta_2 \wedge A(\eps_1,\delta_1)\to A(\eps_2,\delta_2).
\end{equation*}
We sometimes use moduli $\delta_X(\eps)$ also in contexts where $\eps>0$ is not restricted to $(0,2].$ Note that 
for $\eps>2$, the property $A(\eps,\delta)$ trivially holds for any $\delta$ and so we can arbitrarily extend 
the modulus to all $\eps>0$ and in the case of moduli such as $\frac{\eps^2}8$ which are already defined for 
all $\eps>0$ we can just take this value also for $\eps>2.$  

Let us now briefly recall some negative results outside of uniformly convex Banach spaces. If~$C$ is a~convex closed subset of a~reflexive strictly convex Banach space~$X,$ the metric projection $P_C\col X\to C$ is a well-defined single-valued mapping, which however is not necessarily continuous. A counterexample with $C$ being a codimension~$2$ subspace is due to Brown~\cite{brown}. In \cite[p. 12]{gr}, Goebel and Reich refer also to an unpublished counterexample of Kripke.

We would like to mention an alternative definition of a proximal mapping with a Young function. It was introduced by Penot and Ratsimahalo \cite[Definition 3.4]{pr} as follows:
\begin{equation} \label{eq:pmpr} 
 \aprox_{\lam,f}^{\Phi}(x)\as\argmin_{y\in X} \l[f(y)+\lam\Phi\l(\frac{\|x-y\|}{\lam}\r) \r],\quad x\in X,
\end{equation} 
where $\lam>0.$  For $\phi(t)\as t^{p-1},$ where $p\ge2,$ we also recover~\eqref{eq:ppm}. Even though the proximal mapping from~\eqref{eq:pmy} and the one from~\eqref{eq:pmpr} both have similar properties for a \emph{fixed} parameter~$\lam,$ they scale with~$\lam$ differently. It turns out that the scaling is more favorable in definition~\eqref{eq:pmy} in the sense that the properties of that proximal mapping (for instance the variational inequality in~\eqref{eq:varineq2} and uniform continuity on bounded sets in Theorem~\ref{thm:uc}) depend on a modulus of uniform convexity of the function $\Phi\circ\|\cdot\|,$ denoted $\delta_{\Phi\circ\|\cdot\|,r_0}$ which is defined on a ball of radius~$r_0,$ and this radius is \emph{independent} of $\lam\in(0,1].$ On the other hand analogous results for the proximal mapping defined in~\eqref{eq:pmpr} need a modulus $\delta_{\Phi\circ\|\cdot\|,\frac{r}{\lam}}$ with~$\frac{r}{\lam}$ going to infinity as $\lam\to0.$ For such a reason we decided to prefer~\eqref{eq:pmy} to~\eqref{eq:pmpr}; see also Remarks~\ref{rem:varineq} and~\ref{rem:ucon}. On the other hand we stress that the \emph{Moreau envelope} corresponding to~\eqref{eq:pmpr}, that is,
\begin{equation*}
 f_\lam(x)\as\inf_{y\in X} \l[f(y)+\lam\Phi\l(\frac{\|x-y\|}{\lam}\r) \r],\quad x\in X,
\end{equation*}
has a deeper meaning. It is known as the \emph{Hopf-Lax formula} and is related to (viscosity) solutions to the Hamilton-Jacobi equations. Namely, a function
\begin{equation*}
 u(t,x)\as \inf_{y\in \rls^n} \l[h(y)+t\Phi\l(\frac{\|x-y\|}{t}\r) \r],\quad x\in \rls^n, t>0,
\end{equation*}
is, under appropriate assumptions, a (viscosity) solution to the Hamilton-Jacobi equation
\begin{align*}
 \partial_t u(t,x) +\Phi^*(\partial_x u(t,x)) & = 0 \\  u(0,x) & =h(x).
\end{align*}
Here $\Phi^*$ stands for the Fenchel-Legendre transformation of~$\Phi.$ For more details, the interested reader is referred to Evans' book~\cite{evans}. Admittedly, we do know of any deeper meaning of the Moreau envelope corresponding to~\eqref{eq:pmy}.

We conclude this Introduction by mentioning related directions of research. The continuity of metric projections in Banach spaces which are both uniformly convex and uniformly smooth was established for instance in \cite[Theorem 2.8]{bl}. The continuity of proximal mappings as well as a closely related problem of the differentiability of Moreau envelopes have been studied at varying levels of generality (e.g. even for nonconvex functions) by a number of authors including Bernard, Thibault and Zlateva \cite{btz}, Cepedello-Boiso \cite{cepedello-boiso}, Kecis and Thibault \cite{kecis-thibault}, Ngai and Penot \cite{np}, Str\"omberg \cite{stroemberg-ark,stroemberg-dis}. However, all those results rely on the (uniform) smoothness of the norm (in addition to uniform convexity), whereas it is our purpose in the present paper to obtain the continuity of proximal mappings without any differentiability assumptions on the norm. Interestingly, despite of the fact that we do not require the norm to be smooth, we obtain the same H\"older constant in the case of a power type~$p$ uniformly convex norm. Indeed, both \cite[Proposition 4.1]{kecis-thibault} and our Corrolary~\ref{cor:hoelder} give the H\"older constant~$\rec{p}$ in this case. Admittedly, the result in \cite[Proposition 4.1]{kecis-thibault} applies also to nonconvex functions.

Finally, we would like to mention that there exists a rich theory of proximal mappings with Bregman divergences; see for instance \cite[3.1.5]{bi}. Yet another type of proximal mappings $\prox\col X^*\to 2^X$ was studied in~\cite{brs}.
 
\section{Preliminaries on uniformly convex spaces and functions} \label{sec:pre}

Let $\l(X,\|\cdot\|\r)$ be a Banach space. If for each $\eps\in(0,2]$ there exists $\delta_X(\eps)>0$ such that
\begin{equation*}
 \delta_X(\eps)\le 1-\l\|\hlf{x+y}\r\|,
\end{equation*}
for every $x,y\in B_X$ satisfying $\|x-y\|\ge\eps,$ we say that $\l(X,\|\cdot\|\r)$ is \emph{uniformly convex} and we call any such function $\delta_X\col(0,2]\to(0,1]$ a \emph{modulus of uniform convexity.}  If there exist $K>0$ and $p\ge2$ such that $\delta_X(\eps)\as K\eps^p,$ for every $\eps\in(0,2],$ is a modulus of uniform convexity, we say that~$X$ has~a modulus of uniform convexity of \emph{power type}~$p.$

There are several fundamental renorming theorems related to uniform convexity. Enflo's~\cite{enflo} and James' \cite{james,james-super} theorems together give that a Banach space admits an equivalent uniformly convex norm if and only if it is superreflexive. 

Enflo \cite{enflo} also showed that a Banach space admits an equivalent uniformly convex norm if and only if it admits an equivalent uniformly smooth norm. Combined with a result of Asplund~\cite{asplund-av}, it implies that a Banach space which admits a uniformly convex renorming admits an equivalent norm which is both uniformly convex and uniformly smooth.

A theorem of Pisier~\cite{pisier} says that each uniformly convex Banach space admits an equivalent norm with a~modulus of uniform convexity of power type $p,$ for some $p\ge2.$

For uniform convexity in metric spaces, see for instance a recent paper by Kell~\cite{kell} and the references therein.

To our knowledge, Asplund was the first to define uniform convexity for functions~\cite{asplund-av}. A convex lsc function $h\fun$ is \emph{uniformly convex} on a convex set $C\subset X$ if for each $\eps>0$ there exists $\delta_{h,C}(\eps)>0$ such that
\begin{equation} \label{eq:ucfunmodulus}
 \delta_{h,C}(\eps)\le \half h(x)+\half h(y)-h\l(\hlf{x+y}\r),
\end{equation}
for every $x, y\in C\cap\dom h$ with $\|x-y\|\ge\eps.$ Here $\dom h\as\l\{x\in X\col h(x)<\infty \r\}$ stands for the \emph{domain} of the function~$h.$ The function $\delta_{h,C}\mdl$ is called a \emph{modulus of uniform convexity.} To simplify the notation, we will write $\delta_{h,r}$ instead of $\delta_{h,B(0,r)}.$ If there exist $K>0$ and $p\ge1$ such that $\delta_{h,C}(\eps)\as K\eps^p,$ for every $\eps\in(0,\infty),$ is a modulus of uniform convexity, we say that~$h$ has a~modulus of uniform convexity of \emph{power type}~$p.$ Equivalently, we can say that $h$ is uniformly convex on~$C$ if, given $\eps>0,$ there exists $\gamma_{h,C}(\eps)>0$ such that we have 
\begin{equation} \label{eq:ucfungauge}
 h\l((1-t)x+ty\r)\le (1-t)h(x)+th(y)-t(1-t)\gamma_{h,C}(\eps),
\end{equation}
for every $t\in[0,1]$ and $x,y\in C\cap\dom h$ with $\|x-y\|\ge\eps.$ Indeed, if we have~\eqref{eq:ucfungauge}, then inequality~\eqref{eq:ucfunmodulus} holds true with $\delta_{h,C}(\eps)\as\frac14\gamma_{h,C}(\eps).$ On the other hand, if we have~\eqref{eq:ucfunmodulus}, then one can put $\gamma_{h,C}(\eps)\as2\delta_{h,C}(\eps)$ to obtain~\eqref{eq:ucfungauge}; see \cite[Remark 2.1]{zalinescu83} or \cite[p. 203]{zalinescu}.

We shall need the following result due to Z{\u{a}}linescu~\cite{zalinescu83}.
\begin{thm}[Z{\u{a}}linescu] \label{thm:uctoum}
 Let $h\fun$ be a convex lsc function and $C\subset X$ be a convex set. If~$h$ satisfies~\eqref{eq:ucfungauge}, then  we have
 \begin{equation*}
  \lang x^*-y^*,x-y\rang \ge 2\gamma_{h,C}(\eps).
 \end{equation*}
for every $x,y\in C\cap\dom h$ with $\|x-y\|\ge\eps$ and $x^*\in \partial h(x)$ and $y^*\in \partial h(y).$
\end{thm}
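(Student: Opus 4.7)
The plan is to combine the subgradient inequality at $x$ with the strict convexity gap from \eqref{eq:ucfungauge} applied along the segment $[x,y]$, then pass to the limit $t\to 0^+$ to promote the ordinary subgradient inequality into a strengthened one that carries an extra $\gamma_{h,C}(\eps)$ term. The desired estimate then drops out by adding the analogous strengthened inequality obtained by swapping the roles of $x$ and $y$.

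Concretely, fix $x,y\in C\cap\dom h$ with $\|x-y\|\ge\eps$, $x^*\in\partial h(x)$, and $t\in(0,1)$, and set $z_t\as(1-t)x+ty$. The subgradient inequality at $x$ evaluated at $z_t$ reads $h(z_t)\ge h(x)+t\lang x^*, y-x\rang$, while \eqref{eq:ucfungauge} gives $h(z_t)\le (1-t)h(x)+t h(y)-t(1-t)\gamma_{h,C}(\eps)$. Combining both, subtracting $h(x)$, and dividing by $t>0$ yields
\begin{equation*}
\lang x^*, y-x\rang \le h(y)-h(x)-(1-t)\gamma_{h,C}(\eps).
\end{equation*}
Taking the infimum of the right-hand side over $t\in(0,1)$ (attained as $t\to 0^+$) tightens this to $h(y)\ge h(x)+\lang x^*, y-x\rang+\gamma_{h,C}(\eps)$. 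The same argument with the roles of $x$ and $y$ exchanged gives $h(x)\ge h(y)+\lang y^*, x-y\rang+\gamma_{h,C}(\eps)$. Adding these two displays cancels $h(x)+h(y)$ and rearranges to $\lang x^*-y^*,x-y\rang\ge 2\gamma_{h,C}(\eps)$, as claimed.

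The only subtle point is the passage $t\to 0^+$: at a fixed $t\in(0,1)$ the bound carries only the coefficient $(1-t)$, and in particular $t=\half$ alone would cost a factor of two and yield merely $\lang x^*-y^*,x-y\rang\ge\gamma_{h,C}(\eps)$. The optimal factor $2$ exploits the fact that the left-hand side $\lang x^*, y-x\rang$ is independent of $t$, so one may freely pass to $\sup_{t\in(0,1)}(1-t)\gamma_{h,C}(\eps)=\gamma_{h,C}(\eps)$ on the right. Everything else is a direct application of the two defining inequalities, so this limiting step is the one place where care is required.
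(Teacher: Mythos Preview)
Your argument is correct: combining the subgradient inequality at $x$ with \eqref{eq:ucfungauge}, dividing by $t$, and letting $t\to 0^+$ is exactly the standard route to the strengthened first-order inequality, and adding the symmetric version gives the claim. The paper itself does not supply a proof but simply refers to Z\u{a}linescu's original works \cite{zalinescu83,zalinescu}, so there is no in-paper argument to compare against; your proof is in fact the one found there.
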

\begin{proof}
 See \cite[Theorem 2.2]{zalinescu83} or \cite[Corollary 3.4.4]{zalinescu}.
\end{proof}

\section{Results}

Throughout this section we assume that $(X,\|\cdot\|)$ is a Banach space and $f\fun$ is a convex lsc function. If $X$ is uniformly convex, we consider a proximal mapping $\prox_{\lam,f}^{\Phi}$ defined in~\eqref{eq:pmy}. An important ingredient for our results is the following theorem due to Z{\u{a}}linescu~\cite{zalinescu83}.
\begin{thm}[Z{\u{a}}linescu] \label{thm:zal}
Let $(X,\|\cdot\|)$ be uniformly convex. Then the function $\Phi\circ\|\cdot\|$ is uniformly convex on each bounded subset of~$X.$
\end{thm}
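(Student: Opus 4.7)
Fix $r,\eps>0$ and let $x,y\in B(0,r)$ with $\|x-y\|\ge \eps$; we must bound
\begin{equation*}
\half\Phi(\|x\|)+\half\Phi(\|y\|)-\Phi\l(\l\|\hlf{x+y}\r\|\r)
\end{equation*}
below by a positive quantity $\delta=\delta(r,\eps)>0$ independent of $x,y$. Set $a\as\|x\|$, $b\as\|y\|$, $M\as\max(a,b)$; the triangle inequality forces $M\ge\eps/2$. Since $\Phi$ is convex and nondecreasing, one always has the chain
$\Phi(\|(x+y)/2\|)\le \Phi((a+b)/2)\le \half\Phi(a)+\half\Phi(b)$, and the plan is to render one of these two inequalities quantitative, depending on a threshold $\eta>0$ on $|a-b|$.

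\emph{The two regimes.} If $|a-b|\ge\eta$, only strict convexity of $\Phi$ is needed: writing the Jensen defect via $\Phi(t)=\int_0^t\phi(s)\di s$ as
\begin{equation*}
\tfrac12\int_0^{|b-a|/2}\l[\phi\l(\tfrac{a+b}{2}+u\r)-\phi\l(\tfrac{a+b}{2}-u\r)\r]\di u,
\end{equation*}
it suffices to prove the positivity of $\omega\as\inf_{t\in[0,r]}[\phi(t+\eta/2)-\phi(t)]$. If instead $|a-b|<\eta$, then $(a+b)/2\ge M-\eta/2$ and uniform convexity of $X$ takes over: rescaling $x,y$ by $M\in[\eps/2,r]$ and applying the modulus $\delta_X$ (together with the monotonicity of the property $A(\eps,\delta)$ to replace $\eps/M$ by $\eps/r$) yields $\|(x+y)/2\|\le M(1-d)$ with $d\as\delta_X(\eps/r)$. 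From $\Phi(M-\eta/2)-\Phi(M(1-d))\ge (Md-\eta/2)\phi(M(1-d))$ and $M(1-d)\ge \eps/4$, a positive gap emerges whenever $\eta$ is chosen small relative to $Md$. Taking e.g.\ $\eta\as\eps d/4$ handles both regimes simultaneously and delivers a modulus that is explicit in $\eps$, $r$, $\delta_X$, and $\phi$.

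\emph{Main obstacle.} The essential difficulty is the positivity of $\omega$: a Young function is assumed only strictly convex, so no modulus of strict convexity for $\Phi$ is a priori available. The argument I envision is a compactness one. Given a minimizing sequence $t_n\to t^*$ in $[0,r]$, separate the cases of right and left approach. For right approach, right-continuity of $\phi$ forces $\phi(t_n+\eta/2)-\phi(t_n)\to \phi(t^*+\eta/2)-\phi(t^*)>0$. For left approach the limit equals $\phi((t^*+\eta/2)^-)-\phi(t^{*-})$, and strict monotonicity of $\phi$ sandwiches this as $\phi((t^*+\eta/2)^-)\ge \phi(t^*+\eta/4)>\phi(t^*)\ge \phi(t^{*-})$, so it is positive. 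Once $\omega>0$ is in hand, balancing the two regimes through the stated choice of $\eta$ is a routine calculation and yields the sought modulus of uniform convexity of $\Phi\circ\|\cdot\|$ on $B(0,r)$.
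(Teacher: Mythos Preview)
Your argument is correct. Note that the paper does not actually prove Theorem~\ref{thm:zal} itself: it simply cites Z{\u{a}}linescu. The paper's own contribution is the quantitative Proposition~\ref{prop:modulcomp}, and your scheme is structurally the same as that proof: both split into the two regimes ``$\bigl|\,\|x\|-\|y\|\,\bigr|$ large'' (handled by the strict/uniform convexity of~$\Phi$ on a compact interval) versus ``$\bigl|\,\|x\|-\|y\|\,\bigr|$ small'' (handled by the modulus~$\delta_X$ after rescaling into~$B_X$), and both observe $\max(\|x\|,\|y\|)\ge\eps/2$ to get a nontrivial factor in front of~$\delta_X$.

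The presentational differences are minor. You argue directly and exploit the representation $\Phi=\int_0^{\cdot}\phi$, reducing the first regime to the positivity of $\omega=\inf_{t\in[0,r]}[\phi(t+\eta/2)-\phi(t)]$, which you obtain by compactness; the paper instead packages this as the abstract moduli $\delta_{\Phi,r}$, $\xi_{\Phi,r}$, $\omega_{\Phi,r}$ (the first of these rests on exactly the same compactness fact, stated as ``since $\Phi$ is strictly convex, it is uniformly convex on $[0,r]$'') and runs a proof by contradiction. Your direct route yields a slightly cleaner modulus; the paper's route keeps the moduli of~$\Phi$ as black boxes, which is convenient when one later wants to plug in explicit $\Phi$'s. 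One small point worth making explicit in your write-up: the inequality $M(1-d)\ge\eps/4$ needs $d=\delta_X(\eps/r)\le\tfrac12$, which is harmless (replace $\delta_X$ by $\min(\delta_X,\tfrac12)$) but should be stated; the paper makes the analogous normalization in Proposition~\ref{prop3.14}.
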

\begin{proof}
 See \cite[Theorem 4.1]{zalinescu83} or \cite[Theorem 3.7.7]{zalinescu}.
\end{proof}
We will now present a quantitative version in which we establish an explicit modulus of uniform convexity of the function~$\Phi\circ\|\cdot\|$ in terms of the modulus of uniform convexity of~$X$ and of the properties of the Young function~$\Phi.$ To this end, we introduce the following notation. Let $r>0.$ Since $\Phi$ is strictly increasing and continuous, there exists $\xi_{\Phi,r}(\eps)>0$ satisfying
\begin{equation*}
 \Phi(\beta)\ge\Phi(\alpha)+\xi_{\Phi,r}(\eps).
\end{equation*}
whenever $\alpha,\beta\in[0,r]$ and $\eps>0$ with $\beta\ge \alpha+\eps.$ Since $\Phi$ is continuous, there exists $\omega_{\Phi,r}(\eps)>0$ such that 
\begin{equation*}
 |\alpha-\beta|<\omega_{\Phi,r}(\eps) \implies \l|\Phi(\alpha)-\Phi(\beta)\r|<\eps,
\end{equation*}
whenever $\alpha,\beta\in[0,r]$ and $\eps>0.$ Since $\Phi$ is strictly convex, it is uniformly convex on $[0,r],$ and hence there exists a modulus of uniform convexity of~$\Phi$ on this interval~$\delta_{\Phi,[0,r]},$ which we will denote shortly by~$\delta_{\Phi,r}.$
\begin{prop} \label{prop:modulcomp}
 Let $(X,\|\cdot\|)$ be uniformly convex with a modulus $\delta_X$ and let $r>0.$ Then for every $\alpha,\beta\in[0,r]$ and every $x,y\in B_X$ such that $\|\alpha x-\beta y\|\ge\eps$ for some $\eps>0$ we have
 \begin{equation*}
  \Phi\l(\hlf{\|\alpha x+\beta y\|}\r)\le\half\Phi(\alpha)+\half\Phi(\beta)-\delta_r(\eps),
 \end{equation*}
with 
\begin{equation*}
 \delta_r(\eps)\as \min\l\{\delta_{\Phi,r}\l(\tilde{\eps}\r),\breve{\eps} \r\},
\end{equation*}
where we put
\begin{equation*}
 \tilde{\eps}\as\min\l\{\hlf{\eps},\omega_{\Phi,\frac32r}\l(2\breve{\eps}\r)\r\},\quad\text{and}\quad \breve{\eps}\as\frac13\xi_{\Phi,r}\l(\frac{\eps}{4}\delta_X\l(\frac{\eps}{2r} \r)\r).  
\end{equation*}
This $\delta_r$ is then a modulus of uniform convexity of $\Phi\circ\|\cdot\|$ on $B(0,r),$ that is, $\delta_{\Phi\circ\|\cdot\|,r}$ in the notation introduced in Section~\ref{sec:pre}. Note that if $\eps>2r,$ then $\delta_r$ can be defined arbitrarily, since $2r\ge\|\alpha x-\beta y\|\ge\eps.$ 
\end{prop}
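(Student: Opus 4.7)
The plan is to split into two cases depending on whether $|\alpha-\beta|$ is above or below the threshold $\tilde\eps$. The underlying principle is that we always have
\[
\Phi\l(\hlf{\|\alpha x+\beta y\|}\r)\le\Phi\l(\hlf{\alpha+\beta}\r)\le\half\Phi(\alpha)+\half\Phi(\beta),
\]
where the first inequality uses monotonicity of $\Phi$ with the triangle inequality, and the second uses convexity of $\Phi$. The quantitative savings $\delta_r(\eps)$ can come either from strict convexity of $\Phi$ (felt when $\alpha$ and $\beta$ differ substantially) or from uniform convexity of the norm (felt when $x$ and $y$ are misaligned and the factors $\alpha,\beta$ are both bounded away from $0$).

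Case~1 ($|\alpha-\beta|\ge\tilde\eps$): since $\alpha,\beta\in[0,r]$, apply the modulus $\delta_{\Phi,r}$ directly to the pair $\alpha,\beta$ to obtain $\half\Phi(\alpha)+\half\Phi(\beta)-\Phi((\alpha+\beta)/2)\ge\delta_{\Phi,r}(\tilde\eps)$. Combined with $\Phi(\|\alpha x+\beta y\|/2)\le\Phi((\alpha+\beta)/2)$, this yields the desired bound with savings $\delta_{\Phi,r}(\tilde\eps)$. Here I rely only on the defining property of the modulus rather than on any monotonicity of $\delta_{\Phi,r}$.

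Case~2 ($|\alpha-\beta|<\tilde\eps\le\eps/2$): first, from $\|\alpha x-\beta y\|\le\alpha+\beta$ and $\|\alpha x-\beta y\|\ge\eps$ I deduce $\alpha+\beta\ge\eps$, so $\min(\alpha,\beta)\ge(\alpha+\beta-|\alpha-\beta|)/2>\eps/4$. Assuming WLOG $\alpha\ge\beta$, the decomposition $\alpha x-\beta y=\beta(x-y)+(\alpha-\beta)x$ gives $\beta\|x-y\|\ge\eps-|\alpha-\beta|\ge\eps/2$, hence $\|x-y\|\ge\eps/(2\beta)\ge\eps/(2r)$. Uniform convexity of $X$ then yields $\|x+y\|\le 2(1-\delta_X(\eps/(2r)))$, and using $\alpha x+\beta y=\beta(x+y)+(\alpha-\beta)x$ I obtain
\[
\hlf{\|\alpha x+\beta y\|}\le\hlf{\alpha+\beta}-\beta\,\delta_X\l(\frac{\eps}{2r}\r)\le\hlf{\alpha+\beta}-\frac{\eps}{4}\delta_X\l(\frac{\eps}{2r}\r).
\]
Call the shift $\eta\as\frac{\eps}{4}\delta_X(\frac{\eps}{2r})$. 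Both $(\alpha+\beta)/2$ and $(\alpha+\beta)/2-\eta$ lie in $[0,r]$ (the latter since $\eta<\eps/2\le(\alpha+\beta)/2$), so the definition of $\xi_{\Phi,r}$ yields $\Phi((\alpha+\beta)/2)-\Phi(\|\alpha x+\beta y\|/2)\ge\xi_{\Phi,r}(\eta)=3\breve\eps$. Convexity of $\Phi$ then produces savings of at least $3\breve\eps\ge\breve\eps$.

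Taking the minimum over the two cases produces the modulus $\min\{\delta_{\Phi,r}(\tilde\eps),\breve\eps\}$. I expect the main obstacle to be bookkeeping the precise form of $\tilde\eps$: in particular, seeing why the extra upper bound $\omega_{\Phi,\frac32r}(2\breve\eps)$ is imposed. It probably enters as a safety margin ensuring that the case distinction behaves well when $\Phi(\alpha)$ and $\Phi(\beta)$ are close (via the contrapositive of the defining property of $\omega_{\Phi,\frac32r}$), which is where the factor~$3$ in the denominator of $\breve\eps$ and the evaluation at $3r/2$ come from. Otherwise the estimates above reduce to routine triangle-inequality manipulations.
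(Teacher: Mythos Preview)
Your argument is correct and, in fact, cleaner than the paper's. The paper proceeds by contradiction: assuming the inequality fails, it first forces $|\alpha-\beta|<\tilde\eps$ via the modulus $\delta_{\Phi,r}$, then pushes through a chain of estimates that eventually contradicts the uniform convexity of~$X$. In that chain the authors pass from $\Phi\l(\alpha\l\|\hlf{x+y}\r\|+\hlf{\tilde\eps}\r)$ to $\Phi\l(\alpha\l\|\hlf{x+y}\r\|\r)+2\breve\eps$, and it is precisely this step that requires the constraint $\tilde\eps\le\omega_{\Phi,\frac32 r}(2\breve\eps)$ (and the evaluation at radius $\frac32 r$, since $\alpha\l\|\hlf{x+y}\r\|+\hlf{\tilde\eps}$ may exceed $r$).

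Your direct two-case argument bypasses this detour entirely: in Case~2 you compare $\Phi\l(\hlf{\|\alpha x+\beta y\|}\r)$ with $\Phi\l(\hlf{\alpha+\beta}\r)$ via $\xi_{\Phi,r}$ and then use plain convexity, so $\omega_{\Phi,\frac32 r}$ never enters. Your suspicion in the final paragraph is therefore on target: the bound $\tilde\eps\le\omega_{\Phi,\frac32 r}(2\breve\eps)$ is an artifact of the paper's proof strategy and is not needed in yours. Indeed, your argument works for any threshold $\tilde\eps\le\hlf\eps$, and in Case~2 you actually obtain savings $3\breve\eps$ rather than $\breve\eps$; so you have in fact established a slightly sharper modulus than stated. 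One cosmetic point: in Case~2 the pair to which you apply $\xi_{\Phi,r}$ is $\l(\hlf{\|\alpha x+\beta y\|},\hlf{\alpha+\beta}\r)$, both of which lie in $[0,r]$; your parenthetical about $\hlf{\alpha+\beta}-\eta\in[0,r]$ is correct but not quite the relevant check.
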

\begin{proof}
By contradiction. Assume there exist $\eps>0$ and $x,y\in B_X$ and $\alpha,\beta\in[0,r]$ such that $\|\alpha x-\beta y\|\ge\eps$ and
\begin{equation} \label{eq:0}
 \Phi\l(\hlf{\|\alpha x+\beta y\|}\r)>\half\Phi(\alpha)+\half\Phi(\beta)-\delta_r(\eps).
\end{equation}
Without loss of generality we may assume $\alpha\le\beta.$ Then
\begin{equation} \label{eq:1}
 \Phi\l(\hlf{\alpha+\beta} \r)\ge\Phi\l(\hlf{\|\alpha x+\beta y\|}\r)>\half\Phi(\alpha)+\half\Phi(\beta)-\delta_r(\eps)\ge \half\Phi(\alpha)+\half\Phi(\beta)-  \delta_{\Phi,r}\l(\tilde{\eps}\r) ,
\end{equation}
and we arrive at
\begin{equation} \label{eq:2}
 |\alpha-\beta|<\tilde{\eps}.
\end{equation}
Moreover,
\begin{equation} \label{eq:34}
 \eps\le\|\alpha x-\beta y\|\le \alpha \|x-y\|+|\alpha-\beta|<\min\l\{ 2\alpha+\tilde{\eps}, r\|x-y\|+\tilde{\eps}\r\}.
\end{equation}
By \eqref{eq:34} we have 
\begin{equation}\label{eq:5}
 \alpha>\hlf{\eps-\tilde{\eps}}\ge\frac{\eps}{4}.
\end{equation}
Then by \eqref{eq:2}
\begin{equation} \label{eq:6}
 \l\|\hlf{\alpha x+\beta y}\r\|\le\alpha\hlf{\|x+y\|}+\half|\alpha-\beta|<\alpha\hlf{\|x+y\|}+\hlf{\tilde{\eps}}.
\end{equation}
By \eqref{eq:34} we also obtain
\begin{equation} \label{eq:7}
 \|x-y\|>\frac{\eps-\tilde{\eps}}{r}\ge\frac{\eps}{2r}.
\end{equation}
Furthermore, we get 
\begin{align*}
 \Phi(\alpha) -\delta_r(\eps) & < \Phi \l(\hlf{\|\alpha x+\beta y\|} \r), \qquad\text{by \eqref{eq:0},} \\ &\le \Phi\l(\alpha\l\| \hlf{x+y}\r\|+\hlf{\tilde{\eps}}\r), \qquad\text{by \eqref{eq:6} and }\Phi\text{ being increasing}, \\ &\le \Phi\l(\alpha\l\|\hlf{x+y} \r\|\r) + 2\breve{\eps}, \qquad\text{since } \alpha\l\| \hlf{x+y}\r\|+\hlf{\tilde{\eps}}<r+\frac{\eps}4\text{ and } \tilde{\eps}\le \omega_{\Phi,\frac32r}\l(2\breve{\eps}\r).
\end{align*}
Consequently,
\begin{equation} \label{eq:9}
 \Phi(\alpha) -\delta_r(\eps) -2\breve{\eps}<\Phi\l(\alpha\l\|\frac{x+y}2\r\| \r).
\end{equation}
Now suppose
\begin{equation}\label{eq:10}
 \l\|\hlf{x+y} \r\|\le 1-\delta_X\l( \frac{\eps}{2r}\r).
\end{equation}
Then
\begin{equation*}
 \alpha-\alpha\l\| \hlf{x+y}\r\|=\alpha\l(1-\l\| \hlf{x+y}\r\| \r)>\frac{\eps}{4}\delta_X\l( \frac{\eps}{2r}\r),
\end{equation*}
where we used~\eqref{eq:5}. Hence,
\begin{equation*}
 \Phi(\alpha) \ge \Phi\l(\alpha \l\| \hlf{x+y}\r\|\r) +\xi_{\Phi,r}\l(\frac{\eps}{4}\delta_X\l(\frac{\eps}{2r} \r)\r).
\end{equation*}
Together with \eqref{eq:9} this gives
\begin{equation*}
 3\breve{\eps}\ge\delta_r(\eps)+2\breve{\eps}>\xi_{\Phi,r}\l(\frac{\eps}{4}\delta_X\l(\frac{\eps}{2r} \r)\r)=3\breve{\eps},
\end{equation*}
which is a contradiction. Therefore \eqref{eq:10} is false. Hence
\begin{equation*}
 \l\|\hlf{x+y} \r\|> 1-\delta_X\l( \frac{\eps}{2r}\r),
\end{equation*}
which implies $\|x-y\|<\frac{\eps}{2r},$ but this is impossible on account of~\eqref{eq:7}. The proof is hence complete.
\end{proof}

Let us proceed by showing basic properties of the proximal mapping. The result in Proposition \ref{prop:goodvarineq} as well as its variant in Proposition \ref{prop:welldef} follow by standard arguments.
\begin{prop} \label{prop:goodvarineq}
Let $(X,\|\cdot\|)$ be a uniformly convex Banach space. The proximal mapping from~\eqref{eq:pmy} is well defined and single-valued. Given $x\in X$ and $\lam>0,$ denote $x_\lam\as \prox_{\lam,f}^{\Phi}(x).$ For $\lam_0\as 1,$ choose $r_0>\l\|x_{\lam_0}-x\r\|.$ Then we have
 \begin{equation} \label{eq:varineq2}
 f(y)+\rec{\phi(\lam)}\Phi\l(\|x-y\|\r) \ge f\l(x_\lam\r)+\rec{\phi(\lam)}\Phi\l(\l\|x-x_{\lam}\r\|\r)+\frac{2}{\phi(\lam)}\delta_{\Phi\circ\|\cdot\|,r_0}\l(\l\|y-x_\lam\r\| \r),
 \end{equation}
for every $y\in B\l(x,r_0\r)$ and $\lam\in(0,1].$
\end{prop}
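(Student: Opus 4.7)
The plan is to follow the classical convex-analytic argument for proximal mappings, with Z{\u{a}}linescu's Theorem~\ref{thm:zal} supplying uniform convexity of the regularizer $\Phi\circ\|\cdot\|$ on bounded sets. For well-definedness, fix $x\in X$ and $\lam>0$ and write $F_\lam(y):= f(y) + \rec{\phi(\lam)}\Phi(\|x-y\|)$. This functional is proper, convex and lsc, and coercive: $f$ is minorized by a continuous affine function while $\Phi$ is superlinear by the second condition in~\eqref{eq:young}, so $F_\lam(y)\to\infty$ as $\|y\|\to\infty$. Uniform convexity of $X$ implies reflexivity, so the infimum is attained. Uniqueness comes from Theorem~\ref{thm:zal}: any two minimizers lie in a common sublevel set, which is bounded by coercivity, and on such a set $\Phi\circ\|\cdot\|$ is strictly convex, making $F_\lam$ strictly convex and forcing the two minimizers to coincide.

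To bring uniform convexity of $\Phi\circ\|\cdot\|$ on $B(0,r_0)$ into play, I next establish $x_\lam\in B(x,r_0)$ for every $\lam\in(0,1]$. Pairing the optimality inequality of $x_\lam$ tested against $x_1$ with that of $x_1$ tested against $x_\lam$ and summing yields
\begin{equation*}
\l(\rec{\phi(\lam)}-\rec{\phi(1)}\r)\bigl[\Phi(\|x-x_\lam\|)-\Phi(\|x-x_1\|)\bigr]\le 0.
\end{equation*}
Since $\phi$ is strictly increasing and $\lam\le 1$, the parenthesised factor is nonnegative, so $\Phi(\|x-x_\lam\|)\le \Phi(\|x-x_1\|)$, and strict monotonicity of $\Phi$ forces $\|x-x_\lam\|\le \|x-x_1\|<r_0$.

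For the variational inequality, let $y\in B(x,r_0)$. Then $y-x$ and $x_\lam-x$ both lie in $B(0,r_0)$, so uniform convexity of $\Phi\circ\|\cdot\|$ on $B(0,r_0)$ (translated by $-x$, which is harmless since uniform convexity is translation-invariant), combined with convexity of $f$ evaluated at the midpoint, gives
\begin{equation*}
F_\lam\l(\hlf{y+x_\lam}\r)\le \half F_\lam(y) + \half F_\lam(x_\lam) - \rec{\phi(\lam)}\delta_{\Phi\circ\|\cdot\|,r_0}(\|y-x_\lam\|).
\end{equation*}
Minimality of $x_\lam$ bounds the left-hand side below by $F_\lam(x_\lam)$, and rearranging produces~\eqref{eq:varineq2}. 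The one genuinely nontrivial point is the $\lam$-uniform bound $\|x_\lam-x\|<r_0$, which is essential in order to invoke a \emph{single} modulus $\delta_{\Phi\circ\|\cdot\|,r_0}$ with $r_0$ independent of $\lam\in(0,1]$; the monotonicity computation above supplies exactly this, and the rest is a routine application of the convexity machinery.
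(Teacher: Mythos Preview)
Your proof is correct and follows essentially the same route as the paper: well-definedness via coercivity and strict convexity, the monotonicity of $\lam\mapsto\|x-x_\lam\|$ obtained by pairing the two optimality inequalities (the paper does the same computation with a slightly different bookkeeping), and then the midpoint uniform-convexity inequality for $F_\lam$ combined with minimality of $x_\lam$. The only cosmetic point is that when $\lam=1$ the parenthesised factor vanishes, so the conclusion $\|x-x_\lam\|\le\|x-x_1\|$ is trivial rather than deduced from the product inequality.
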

\begin{proof}
Let $\lam>0$ and denote
 \begin{equation*}
  h\as f+\rec{\phi(\lam)}\Phi\l(\|x-\cdot\|\r).
 \end{equation*}
The uniform convexity of~$h$ on bounded sets along with its coercivity imply that $\prox_{\lam,f}^{\Phi}\map$ is a well-defined single-valued mapping.

We now claim that $\lam\mapsto\l\|x_{\lam}-x\r\|$ is nondecreasing. Indeed, if $0<\kappa<\lam,$ then
\begin{equation*} f\l(x_\lam\r)+\frac1{\phi(\kappa)}\Phi\l(\l\|x-x_\lam\r\|\r)\ge f\l(x_\kappa\r)+\frac1{\phi(\kappa)}\Phi\l(\l\|x-x_\kappa\r\|\r),\end{equation*}
and furthermore,
\begin{align*}
f\l(x_\lam\r)+\frac1{\phi(\lam)}\Phi\l(\l\|x-x_\lam\r\|\r) & \ge f\l(x_\kappa\r)+\frac1{\phi(\lam)}\Phi\l(\l\|x-x_\kappa\r\|\r)  \\ & \quad +\l(\frac1{\phi(\kappa)}-\frac1{\phi(\lam)}\r)\l[\Phi\l(\l\|x-x_\kappa\r\|\r)-\Phi\l(\l\|x-x_\lam\r\|\r)\r],
\end{align*}
which already implies $\l\|x-x_\kappa\r\|\le \l\|x-x_\lam\r\|.$

Note that the function~$h$ is uniformly convex on~$B\l(x,r_0\r),$ more precisely,
\begin{equation}\label{eq:hisuc}
 h\l(\hlf{u+v}\r)\le\half h(u)+\half h(v)-\rec{\phi(\lam)}\delta_{\Phi\circ\|\cdot\|,r_0}\l(\|u-v\|\r),
\end{equation}
for every $u,v\in B\l(x,r_0\r).$ 

Since for $\lam\in(0,1]$ we have $r_0>\l\|x_{\lam_0}-x\r\|\ge \l\|x_\lam-x\r\|,$ we can apply inequality~\eqref{eq:hisuc} with $u\as x_\lam$ and $v\as y,$ for an arbitrary $y\in B\l(x,r_0\r),$ to arrive at
\begin{equation*}
 \frac{2}{\phi(\lam)}\delta_{\Phi\circ\|\cdot\|,r_0}\l(\l\|x_\lam-y\r\|\r) \le h\l(x_\lam\r)+h(y)-2h\l(\hlf{x_\lam+y} \r)  \le h\l(x_\lam\r)+h(y)-2h\l(x_\lam\r)  \le h(y)-h\l(x_\lam\r).
\end{equation*}
This gives~\eqref{eq:varineq2}.
\end{proof}

\begin{rem} \label{rem:varineq}
Note that in the variational inequality in~\eqref{eq:varineq2}, the last term on the right-hand side depends on the modulus of uniform convexity $\delta_{\Phi\circ\|\cdot\|,r_0},$ which is defined on a ball with radius $r_0$ and, importantly, this $r_0$ is \emph{independent} of $\lam\in(0,1].$ This fact is in sharp contrast with a variational inequality in~\eqref{eq:varineq} below for the proximal mapping~\eqref{eq:pmpr}, in which the modulus of uniform convexity $\delta_{\Phi\circ\|\cdot\|,\frac{r}{\lam}}$ is defined on a~ball with radius $\frac{r}{\lam},$ which, of course, grows to infinity as $\lam\to0.$ That is, as we decrease $\lam,$ we need to use a new modulus which is defined on a bigger ball.

One reason why we are interested in the range $\lam\in\l(0,\lam_0\r]$ for some $\lam_0>0,$ say $\lam_0\as 1,$ is that iterative applications of proximal mappings/resolvents with decreasing step sizes (e.g. $\lam_n\as\frac{t}{n},$ for a fixed time $t$ and every $n\in\nat$) lead to solutions to abstract Cauchy problems, see for instance \cite{ags,bp70isr,bp70jfa,cl,hille,reich2,reich3,reich1}.
\end{rem}

Let us now hence present a variational inequality for the proximal mapping~\eqref{eq:pmpr}.
\begin{prop} \label{prop:welldef}
 Let $(X,\|\cdot\|)$ be a uniformly convex Banach space and $\lam>0.$ Then the proximal mapping $\aprox_{\lam,f}^{\Phi}$ from~\eqref{eq:pmpr} is well defined and single-valued. Moreover, if for $x\in X$ we denote $x_\lam\as\aprox_{\lam,f}^{\Phi}(x),$ then for $r>0$ such that $ r\ge \l\|x-x_\lam\r\|,$ we have
 \begin{equation} \label{eq:varineq}
  f(y)+\lam\Phi\l(\frac{\|x-y\|}\lam\r) \ge f\l(x_\lam\r)+\lam\Phi\l(\frac{\l\|x-x_\lam\r\|}{\lam}\r)+2\lam\delta_{\Phi\circ\|\cdot\|,\frac{r}{\lam}}\l(\l\|\frac{y-x_\lam}{\lam}\r\| \r),
 \end{equation}
for every $y\in B\l(x, r\r).$ 
\end{prop}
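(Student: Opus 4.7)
The plan is to mimic the proof of Proposition~\ref{prop:goodvarineq}, replacing the regularizer $\rec{\phi(\lam)}\Phi(\|x-\cdot\|)$ by the rescaled version $g(y)\as \lam\Phi(\|x-y\|/\lam)$ and carefully tracking how the change of variables $u=(x-y)/\lam$ affects the modulus of uniform convexity. First I would set $h\as f+g$. The substitution $u\as (x-y)/\lam,$ $v\as (x-z)/\lam$ sends $B(x,\rho)$ bijectively onto $B(0,\rho/\lam)$ and satisfies $\|u-v\|=\|y-z\|/\lam$ together with $(u+v)/2 = (x-(y+z)/2)/\lam.$ Applying Theorem~\ref{thm:zal} quantified via Proposition~\ref{prop:modulcomp} to $\Phi\circ\|\cdot\|$ on $B(0,\rho/\lam)$ and multiplying through by $\lam$ therefore yields
$$g\!\left(\hlf{y+z}\right) \le \half g(y)+\half g(z) -\lam\,\delta_{\Phi\circ\|\cdot\|,\rho/\lam}\!\left(\frac{\|y-z\|}{\lam}\right),$$
for every $y,z\in B(x,\rho)$ and every $\rho>0.$

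Combined with convexity of $f,$ this makes $h$ uniformly convex on each closed ball around $x.$ Together with the coercivity of $g$ (which is immediate from $\lim_{t\to\infty}\Phi(t)/t=\infty$ in \eqref{eq:young}) and the lower semicontinuity of both summands, the direct method produces a unique minimizer $x_\lam,$ so $\aprox_{\lam,f}^{\Phi}$ is well defined and single-valued.

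For the variational inequality \eqref{eq:varineq}, I would fix $r\ge\|x-x_\lam\|$ so that $x_\lam\in B(x,r),$ take arbitrary $y\in B(x,r),$ and apply the displayed uniform-convexity estimate with $z=x_\lam$ and $\rho=r$:
$$h\!\left(\hlf{x_\lam+y}\right) \le \half h(x_\lam)+\half h(y)-\lam\,\delta_{\Phi\circ\|\cdot\|,r/\lam}\!\left(\frac{\|y-x_\lam\|}{\lam}\right).$$
Minimality of $x_\lam$ gives $h(x_\lam)\le h((x_\lam+y)/2);$ substituting and rearranging immediately produces \eqref{eq:varineq}.

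The only substantive new point compared with Proposition~\ref{prop:goodvarineq} — and the place I expect to want to be most careful — is the rescaling of the modulus: because $\Phi$ acts on $\|x-y\|/\lam$ rather than on $\|x-y\|$ directly, the relevant ball becomes $B(0,r/\lam)$ rather than $B(0,r_0),$ and the argument of $\delta_{\Phi\circ\|\cdot\|,r/\lam}$ is divided by $\lam.$ This is precisely the phenomenon highlighted in Remark~\ref{rem:varineq}: the radius on which the modulus must be controlled grows to infinity as $\lam\to 0,$ in contrast to the $\lam$-independent radius $r_0$ appearing in \eqref{eq:varineq2}.
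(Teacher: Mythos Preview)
Your proposal is correct and follows essentially the same argument as the paper: define $h\as f+\lam\Phi(\|x-\cdot\|/\lam)$, observe its uniform convexity on $B(x,r)$ with the rescaled modulus $\lam\,\delta_{\Phi\circ\|\cdot\|,r/\lam}(\|\cdot\|/\lam)$, deduce existence and uniqueness from this together with coercivity, and then combine the midpoint inequality with the minimality of $x_\lam$ to obtain~\eqref{eq:varineq}. The only difference is that you spell out the change of variables $u=(x-y)/\lam$ justifying the rescaled modulus, whereas the paper simply states inequality~\eqref{eq:uch} directly.
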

\begin{proof}
Choose $x\in X$ and denote $h\as f+\lam\Phi\l(\frac{\|x-\cdot\|}{\lam}\r).$ Note that the function~$h$ is uniformly convex on~$B\l(x, r\r),$ more precisely,
\begin{equation} \label{eq:uch}
 h\l(\hlf{u+v}\r)\le\half h(u)+\half h(v)-\lam\delta_{\Phi\circ\|\cdot\|,\frac{r}{\lam}}\l(\frac{\|u-v\|}{\lam}\r)
\end{equation}
for every $u,v\in B\l(x, r\r).$ The fact that the proximal mapping is well defined and single-valued is a~consequence of the uniform convexity of~$h$ on bounded sets (Theorem~\ref{thm:zal}) and coercivity.

Applying inequality~\eqref{eq:uch} with $u\as x_\lam$ and $v\as y,$ for an arbitrary $y\in B\l(x, r\r),$ yields
\begin{equation*}
 2\lam\delta_{\Phi\circ\|\cdot\|,\frac{r}{\lam}}\l(\frac{\l\|x_\lam-y\r\|}{\lam}\r) \le  h\l(x_\lam\r)+h(y)-2h\l(\hlf{x_\lam+y} \r) \le h\l(x_\lam\r)+h(y)-2h\l(x_\lam\r) \le h(y)-h\l(x_\lam\r).
\end{equation*}
This gives~\eqref{eq:varineq}.
\end{proof}

To obtain further properties of the proximal mapping, we introduce a~function $\eta_\Phi\col(0,\infty)\to(0,\infty)$ such that 
\begin{equation} \label{eq:decreasezero}
 \frac{\Phi\l(\eta_{\Phi}(t)\r)}{\eta_{\Phi}(t)} \le t,
\end{equation}
for every $t\in(0,\infty).$ This function is to witness the first property of~$\Phi$ in~\eqref{eq:young}. (Note that $s\mapsto\frac{\Phi(s)}{s}$ is strictly increasing.) Observe that it also witnesses the limit behavior of $\phi$ at~$0.$ Indeed,
\begin{equation*}
 \phi\l( \hlf{\eta_{\Phi}(t)} \r)\le \frac{\Phi\l(\eta_{\Phi}(t) \r)-\Phi\l(\hlf{\eta_{\Phi}(t)} \r)}{\eta_{\Phi}(t)-\hlf{\eta_{\Phi}(t)}}\le2\frac{\Phi\l( \eta_{\Phi}(t) \r)}{\eta_{\Phi}(t)}\le 2t,
\end{equation*}
for every $t\in(0,\infty).$ In a similar way we quantify the second property of~$\Phi$ in~\eqref{eq:young}, that is, we introduce a~function $\rho_\Phi\col(0,\infty)\to(0,\infty)$ such that 
\begin{equation} \label{eq:ratedivergence}
 \frac{\Phi\l(\rho_{\Phi}(t)\r)}{\rho_{\Phi}(t)} \ge t,
\end{equation}
for every $t\in(0,\infty).$

\begin{prop} \label{prop:lamtozero}
 Let $\l(X,\|\cdot\|\r)$ be uniformly convex with a modulus of uniform convexity~$\delta_X\le1.$ Given $x\in X$ and $\lam>0,$ we denote $x_\lam\as \prox_{\lam,f}^{\Phi}(x).$ Then $x_\lam\to P_{\cldom f}(x)$ as $\lam\to0.$ More quantitatively, given $\eps>0,$ we have
\begin{equation*}
\l\|x_\lam- P_{\cldom f}(x)\r\|<\eps,
\end{equation*}
for every $\lam\in(0,\Lambda),$ where
\begin{equation} \label{eq:biglam}
 \Lambda\as\min\l\{1,\half\eta_\Phi\l(\frac{\hlf{\tilde{\eps}}\phi\l(\frac{\eps}{5}\r)}{\zeta} \r) \r\},
\end{equation}
and $\tilde{\eps}\as \frac{\eps}{10}\delta_X\l(\frac{\eps}{\beta}\r)$ with $\alpha\as f\l(x_{\lam_0}\r)$ and $\beta\as \l\|x-x_{\lam_0}\r\|$ for $\lam_0\as1.$ And $\zeta>0$ is a constant such that $\zeta>f(z)-\alpha$ for some
\begin{equation*}
  z\in B\l(P_{\cldom f}(x), \min\l\{\frac{\eps}{20},\tilde{\eps}\r\} \r).
\end{equation*}
\end{prop}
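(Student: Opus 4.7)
The plan is a contradiction argument: assume $\lam<\Lambda$ yet $\|x_\lam-p\|\ge\eps$ with $p\as P_{\cldom f}(x)$, and derive a contradiction. Write $d\as\|x-p\|$, $r_\lam\as\|x-x_\lam\|$, and $s\as\min\{\eps/20,\tilde\eps\}$. As a preliminary, since $x_\lam\in\dom f\subseteq\cldom f$ we have $r_\lam\ge d$; and the monotonicity argument already used in the proof of Proposition~\ref{prop:goodvarineq} shows that $\lam\mapsto r_\lam$ is nondecreasing and $\lam\mapsto f(x_\lam)$ is nonincreasing on $(0,1]$, giving $r_\lam\le\beta$ and $f(x_\lam)\ge\alpha$. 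Hence $f(z)-f(x_\lam)\le f(z)-\alpha<\zeta$.

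The geometric step would apply uniform convexity of $X$ at the midpoint $m\as(x_\lam+p)/2\in\cldom f$. Rescaling $x-x_\lam$ and $x-p$ by $r_\lam$ puts both into $B_X$ (as $r_\lam\ge d$) with difference-norm $\|x_\lam-p\|/r_\lam\ge\eps/\beta$. The monotonicity of the property~$A$ recalled in the introduction then allows one to invoke the modulus $\delta_X(\eps/\beta)$ and conclude $\|x-m\|\le r_\lam(1-\delta_X(\eps/\beta))$; since $m\in\cldom f$ also forces $\|x-m\|\ge d$, one gets $r_\lam-d\ge r_\lam\,\delta_X(\eps/\beta)$. A short case split via the triangle bound $r_\lam+d\ge\|x_\lam-p\|\ge\eps$ yields $r_\lam\ge\eps/5$ (either $d\ge\eps/5\le r_\lam$, or $d<\eps/5$ so $r_\lam>4\eps/5$). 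Combined, $r_\lam-d\ge(\eps/5)\,\delta_X(\eps/\beta)=2\tilde\eps$.

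The analytic step is the prox inequality at $y=z$: $\Phi(r_\lam)\le\Phi(\|x-z\|)+\phi(\lam)\zeta$. Since $\|x-z\|\le d+s\le d+\tilde\eps<d+2\tilde\eps\le r_\lam$, one has $r_\lam-\|x-z\|\ge\tilde\eps$, and a brief subcase split shows that the integration interval $[\|x-z\|,r_\lam]$ always contains a subinterval of length at least $\tilde\eps$ lying in $[\eps/5,\infty)$: if $\|x-z\|\ge\eps/5$ this is immediate; otherwise $d\le\|x-z\|<\eps/5$, which by the previous paragraph forces $r_\lam\ge 4\eps/5$, and $[\eps/5,r_\lam]$ has length $\ge 3\eps/5\ge\tilde\eps$. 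Since $\phi$ is nondecreasing, $\phi\ge\phi(\eps/5)$ on any such subinterval, so
\begin{equation*}
\phi(\lam)\zeta\ge\Phi(r_\lam)-\Phi(\|x-z\|)=\int_{\|x-z\|}^{r_\lam}\phi(u)\,du\ge\phi(\eps/5)\,\tilde\eps.
\end{equation*}
On the other hand, with $t\as\tilde\eps\,\phi(\eps/5)/(2\zeta)$, the hypothesis $\lam<\Lambda\le\eta_\Phi(t)/2$, strict monotonicity of $\phi$, and the estimate $\phi(\eta_\Phi(t)/2)\le 2t$ derived just after~\eqref{eq:decreasezero} combine to give $\phi(\lam)\zeta<2t\zeta=\tilde\eps\,\phi(\eps/5)$, contradicting the previous display.

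I expect the main obstacle to be keeping the various constants tightly aligned: the factor $2$ in $\phi(\eta_\Phi(t)/2)\le 2t$ has to interact with the factor $1/2$ in the definition of $t$, with the bound $r_\lam-d\ge 2\tilde\eps$, and with the choice $s\le\tilde\eps$ to produce a strict contradiction. The case split $d\ge\eps/5$ versus $d<\eps/5$ is equally essential, since without it neither $\|x-z\|$ nor $r_\lam$ is automatically $\ge\eps/5$, and one cannot then replace $\phi$ by the constant $\phi(\eps/5)$ in the integrand.
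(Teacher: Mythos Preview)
Your proof is correct and takes essentially the same approach as the paper's: contradiction via uniform convexity of $X$ (to separate $r_\lam$ from $d$), the prox inequality at $y=z$, and the bound $\phi(\eta_\Phi(t)/2)\le 2t$. The only difference is organizational---the paper splits into cases $d\le\eps/5$ versus $d>\eps/5$ at the outset and invokes uniform convexity only in the second case (using $r_\lam-d\ge d\,\delta_X(\eps/\beta)$ with $d>\eps/5$), whereas you apply uniform convexity uniformly (using $r_\lam-d\ge r_\lam\,\delta_X(\eps/\beta)$ with $r_\lam\ge\eps/5$) and defer the case distinction to the integral estimate.
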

\begin{proof}
Since $\lam\mapsto\l\|x-x_\lam\r\|$ is nondecreasing and since
\begin{equation*}
f\l(x_\lam\r)=\inf_{y\in X}\l\{f(y)\col \|x-y\|\leq \l\|x-x_\lam\r\|\r\},
\end{equation*}
we obtain that $\lam\mapsto f\l(x_\lam\r)$ is nonincreasing.

For $\lam_0\as1$ set $\alpha\as f\l(x_{\lam_0}\r)$ and $\beta\as \l\|x-x_{\lam_0}\r\|.$ Denote $p\as P_{\cldom f}(x).$ Assume $\l\|p-x_\lam\r\|\ge\eps$ for some $\eps>0$ and $\lam>0.$ Let us consider two cases.

Case 1: Assume $\|x-p\|\le\frac{\eps}5.$ Choose $z\in\dom f$ such that $\|p-z\|\le\frac{\eps}{20}$ and $\zeta>0$ with $\zeta>f(z)-\alpha.$ Then
\begin{equation*}
   f\l(x_\lam\r)+\rec{\phi(\lam)}\Phi\l(\l\| x-x_\lam\r\|\r)\le f(z)+\rec{\phi(\lam)}\Phi\l(\l\| x-z\r\|\r)\le f(z)+\rec{\phi(\lam)}\Phi\l(\frac{\eps}{4} \r),
\end{equation*}
and therefore, for $\lam\in(0,1],$ we have
\begin{equation*}
 \frac{4\eps}{5}\phi\l(\frac{\eps}{4} \r)  \le \Phi\l(\l\| x-x_\lam\r\|\r)-\Phi\l(\frac{\eps}{4} \r)\le \phi(\lam)\l(f(z)-f\l(x_\lam\r)\r)\le  \phi(\lam)\l(f(z)-\alpha\r) \le  \phi(\lam)\zeta,
\end{equation*}
since $\l\| x-x_\lam\r\|\ge\l\|x_\lam - p\r\|-\|x-p\|\ge \eps-\frac{\eps}{5}\ge\frac{4\eps}{5}.$ Hence for $\lam$ sufficiently small, namely,
\begin{equation} \label{eq:lam1}
 \lam<\half\eta_\Phi\l( \frac{ \frac{2\eps}{5}\phi \l( \frac{\eps}{4}\r)}{\zeta}    \r),
\end{equation}
we get a contradiction.

Case 2: Assume $\|x-p\|>\frac{\eps}{5}.$ By the uniform convexity of~$X$ we have
\begin{equation} \label{eq:ucproj}
\|x-p\|+\|x-p\|\delta_X\l(\frac{\eps}{\beta}\r)\le \l\|x-x_\lam\r\|.
\end{equation}
Indeed, since $\|x-p\|\le \l\|x-x_\lam\r\|\le\beta,$ we have
\begin{equation*}
 \frac{x-p}{\l\|x-x_\lam\r\|},\frac{x-x_\lam}{\l\|x-x_\lam\r\|}\in B_X,\qquad\text{and}\qquad \rec{\l\|x-x_\lam\r\|} \l\|p-x_\lam\r\|\ge \rec{\beta}\l\|p-x_\lam\r\|\ge\frac{\eps}{\beta},
\end{equation*}
and the definition of uniform convexity yields
\begin{equation*}
 1-\l\|\frac{x-p+x-x_\lam}{2\l\|x-x_\lam\r\|}\r\|\ge\delta_X\l(\frac{\eps}{\beta} \r),
\end{equation*}
and consequently,
\begin{equation*}
 \l\|x-x_\lam\r\|-\l\|p-x\r\|\ge\l\|x-x_\lam\r\|\delta_X\l(\frac{\eps}{\beta} \r),
\end{equation*}
which implies the desired inequality in~\eqref{eq:ucproj}.

Next set
\begin{equation*}
 \tilde{\eps}\as \frac{\eps}{10}\delta_X\l(\frac{\eps}{\beta}\r).
\end{equation*}
By the monotonicity of $\Phi$ and by~\eqref{eq:ucproj} we obtain
\begin{equation} \label{eq:estim1}
f\l(x_\lam\r)+\rec{\phi(\lam)}\Phi\l(\|x-p\|+2\tilde{\eps}\r) \le f\l(x_\lam\r)+\rec{\phi(\lam)}\Phi\l(\l\|x-x_\lam\r\|\r)\le f(z)+\rec{\phi(\lam)}\Phi\l(\l\| x-z\r\|\r),
\end{equation}
for each $z\in X.$ Choose $z\in\dom f$ such that $\|p-z\|\le\tilde{\eps}$ and $\zeta>0$ with $\zeta>f(z)-\alpha.$ Then~\eqref{eq:estim1} yields
\begin{equation*}
f\l(x_\lam\r)+\rec{\phi(\lam)}\Phi\l(\|x-p\|+2\tilde{\eps}\r) \le f(z)+\rec{\phi(\lam)}\Phi\l(\l\|x-p\r\|+\tilde{\eps}\r), 
\end{equation*}
and hence, for $\lam\in(0,1],$ we have
\begin{align*}
 \phi\l(\frac{\eps}{5}\r) \tilde{\eps} & \le\phi\l(\l\| x-p\r\|\r) \tilde{\eps}\le\Phi\l(\|x-p\|+2\tilde{\eps}\r)-\Phi\l(\l\| x-p\r\| +\tilde{\eps}\r) \\ & \le \phi(\lam)\l(f(z)-f\l(x_\lam\r)\r)  \\ & \le \phi(\lam)\l(f(z)-\alpha\r) \\ & \le \phi(\lam)\zeta. 
\end{align*}
Therefore, for $\lam$ satisfying
\begin{equation} \label{eq:lam2}
 \lam<\half\eta_\Phi\l(\frac{\hlf{\tilde{\eps}}\phi\l(\frac{\eps}{5}\r)}{\zeta} \r)
\end{equation}
we obtain a contradiction.

Comparing the condition in~\eqref{eq:lam1} with the one in~\eqref{eq:lam2} we see that~\eqref{eq:lam2} is more restrictive and hence we can set $\Lambda$ as in~\eqref{eq:biglam} to complete the proof.
\end{proof}

As a consequence of the lower semicontinuity of~$f,$ we obtain the corresponding limit behavior of function values.
\begin{cor}
 If $x\in\cldom f$ and we denote $x_\lam\as\prox_f^{\Phi, \lam}(x),$ then $f\l(x_\lam \r)\to f(x)$ as $\lam\to0.$
\end{cor}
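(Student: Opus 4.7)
The plan is to combine the convergence $x_\lambda \to x$ (which follows from Proposition~\ref{prop:lamtozero}) with the lower semicontinuity of $f$ and the minimizing property of $x_\lambda$. First, since $x \in \cldom f$, we have $P_{\cldom f}(x) = x$, so Proposition~\ref{prop:lamtozero} immediately yields $x_\lambda \to x$ as $\lambda \to 0$. By the lower semicontinuity of $f$, this gives
\begin{equation*}
\liminf_{\lambda \to 0} f(x_\lambda) \ge f(x).
\end{equation*}

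For the matching upper bound, I would split into two cases. If $x \in \dom f$, then the defining minimality property of $x_\lambda$, applied with the competitor $y \as x$, gives
\begin{equation*}
 f(x_\lambda) + \frac{1}{\phi(\lambda)} \Phi\l(\l\|x - x_\lambda\r\|\r) \le f(x) + \frac{1}{\phi(\lambda)} \Phi(0) = f(x),
\end{equation*}
since $\Phi(0) = 0$. Dropping the nonnegative second term on the left yields $f(x_\lambda) \le f(x)$ for every $\lambda > 0$, so $\limsup_{\lambda \to 0} f(x_\lambda) \le f(x)$, and combined with the liminf bound we conclude $f(x_\lambda) \to f(x)$.

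If instead $x \in \cldom f \setminus \dom f$, then $f(x) = \infty$ (otherwise $x$ would belong to $\dom f$), and the lsc bound $\liminf f(x_\lambda) \ge f(x) = \infty$ already forces $f(x_\lambda) \to \infty = f(x)$. No obstacle is expected here; the whole argument is a short two-case application of the previous proposition together with lsc, and the minor point to be careful about is only the distinction between $x \in \dom f$ (where we actually use the proximal inequality with $y = x$) and $x \in \cldom f \setminus \dom f$ (where lsc alone suffices).
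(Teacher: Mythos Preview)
Your proof is correct and follows essentially the same approach as the paper: use Proposition~\ref{prop:lamtozero} to get $x_\lambda\to x$, then combine the lower semicontinuity of $f$ with the inequality $f(x_\lambda)\le f(x)$. The paper's version is simply more terse, asserting $f(x)\ge f(x_\lambda)$ as a fact without the explicit case split; your two cases (using the minimizing property when $x\in\dom f$, and the trivial bound when $f(x)=\infty$) make explicit exactly why that inequality holds.
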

\begin{proof}
By virtue of Proposition~\ref{prop:lamtozero}, we know that $x_\lam\to x$ as $\lam\to0.$ Therefore combining the fact $f(x)\ge f\l(x_\lam\r)$ with the lower semicontinuity of~$f,$ that is,
 \begin{equation*}
  \liminf_{\lam\to0} f\l(x_\lam\r)\ge f(x),
 \end{equation*}
gives the desired result.
\end{proof}

Let us continue by providing a useful characterization of the proximal mapping. It is an easy consequence of a result of Asplund.
\begin{lem} \label{lem:proxchar}
 Let $(X,\|\cdot\|)$ be uniformly convex and $\lam>0.$ Given $u,\ol{u}\in X,$ we have $\ol{u}=\prox_{\lam,f}^{\Phi}(u)$ if and only if there exists $u^*\in \rec{\phi(\lam)}J_\phi\l(u-\ol{u}\r)$ such that
 \begin{equation*} 
  f(v)-f\l(\ol{u}\r)\ge\lang u^*,v-\ol{u}\rang,
 \end{equation*}
for each $v\in X.$ 
\end{lem}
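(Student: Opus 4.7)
The plan is to interpret $\ol{u}$ as the minimizer of the convex function
\[
g(v)\as f(v)+\rec{\phi(\lam)}\Phi\l(\|u-v\|\r)
\]
and then apply Fermat's rule together with subdifferential calculus. Since $g$ is proper, convex and lower semicontinuous (and, by Proposition~\ref{prop:goodvarineq}, attains its infimum at a unique point), the identity $\ol{u}=\prox_{\lam,f}^{\Phi}(u)$ is equivalent to $0\in\partial g(\ol{u})$.

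To decompose $\partial g(\ol{u})$, I would invoke the Moreau-Rockafellar sum rule: because the map $v\mapsto\Phi(\|u-v\|)$ is finite and continuous on all of $X$, the standard qualification condition holds and
\[
\partial g(\ol{u})=\partial f(\ol{u})+\rec{\phi(\lam)}\,\partial\bigl(\Phi(\|u-\cdot\|)\bigr)(\ol{u}).
\]
The Asplund result to be invoked identifies $\partial(\Phi\circ\|\cdot\|)(x)=J_\phi(x)$ for every $x\in X$, where $\phi$ is the right derivative of $\Phi$ from~\eqref{eq:young}. Combining this with the chain rule applied to the affine substitution $v\mapsto u-v$ (whose linear part is $-\id$) yields
\[
\partial\bigl(\Phi(\|u-\cdot\|)\bigr)(\ol{u})=-J_\phi(u-\ol{u}).
\]

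Assembling these ingredients, $0\in\partial g(\ol{u})$ holds precisely when there is some $u^*\in\rec{\phi(\lam)}J_\phi(u-\ol{u})$ with $u^*\in\partial f(\ol{u})$, i.e.\ $f(v)-f(\ol{u})\ge\lang u^*,v-\ol{u}\rang$ for every $v\in X$. This is exactly the stated characterization, and both directions follow from this chain of equivalences. The only non-routine ingredient is Asplund's identification of $\partial(\Phi\circ\|\cdot\|)$ with $J_\phi$; the sum-rule qualification is automatic (one summand is continuous on all of $X$), and the chain rule for an affine change of variables is elementary. Consequently, I expect the entire argument to amount to citing these three facts and chaining the equivalences, with no delicate estimates required.
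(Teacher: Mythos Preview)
Your argument is correct and follows essentially the same route as the paper's proof: Fermat's rule, the subdifferential sum rule (justified by continuity of the regularization term), and Asplund's identification $\partial(\Phi\circ\|\cdot\|)=J_\phi$ combined with the affine change of variables $v\mapsto u-v$. The only difference is that you make the sum-rule qualification and the chain rule explicit, whereas the paper passes over these routine points silently.
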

\begin{proof}
 Since $\ol{u}=\prox(u)$ is equivalent to 
\begin{equation*}
 0\in\partial \l(f+\rec{\phi(\lam)}\Phi\l(\|u-\cdot\|\r)\r)\l(\ol{u}\r),
\end{equation*}
it is also equivalent to the existence of $u^*\in X^*$ such that $u^*\in\partial f\l(\ol{u}\r)$ and $-u^*\in\partial\l(\rec{\phi(\lam)}\Phi\circ\|u-\cdot\| \r)\l(\ol{u}\r).$ The latter inclusion then reads
\begin{equation*}
 u^*\in\rec{\phi(\lam)}\partial\l(\Phi\circ\|\cdot\| \r)\l(u-\ol{u}\r)=\rec{\phi(\lam)}J_\phi\l(u-\ol{u}\r),
\end{equation*}
where the last equality follows from \cite[Theorem 1]{asplund}.
\end{proof}

The following lemma, which relies on standard arguments and Lemma~\ref{lem:proxchar}, will be needed in the proof of our main result (Theorem~\ref{thm:uc}).
\begin{lem} \label{lem:estim}
Let $(X,\|\cdot\|)$ be uniformly convex, $x\in X$ and $r>0.$ Then there exists $R>0$ such that for every $y\in B(x,r)$ and $\lam\in(0,1]$ we have $\l\|y-\prox_{\lam,f}^{\Phi}(y)\r\|\le R.$
\end{lem}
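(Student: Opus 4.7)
The plan is to reduce the problem to the case $\lam = 1$ using monotonicity, and then to exploit the superlinear growth of $\Phi$ together with a continuous affine minorant of~$f$.

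\smallskip

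First, for each $y \in B(x,r)$ and $\lam \in (0,1]$ denote $y_\lam \as \prox_{\lam,f}^{\Phi}(y)$. The proof of Proposition~\ref{prop:goodvarineq} shows that $\lam \mapsto \|y - y_\lam\|$ is nondecreasing, and hence $\|y - y_\lam\| \le \|y - y_1\|$ for every $\lam \in (0,1]$. Therefore it is enough to find a bound~$R$ for $\|y - y_1\|$ that is uniform in $y \in B(x,r)$.

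\smallskip

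Since $f$ is proper convex lsc, it admits a continuous affine minorant: there exist $z^* \in X^*$ and $c \in \rls$ such that $f(u) \ge \lang z^*, u \rang + c$ for all $u \in X$. Fix any $y_0 \in \dom f$. By the definition of $\prox_{1,f}^\Phi(y)$ and using $\phi(1)$ as the scaling constant, we have
\begin{equation*}
 f(y_1) + \rec{\phi(1)}\Phi\l(\|y - y_1\|\r) \le f(y_0) + \rec{\phi(1)}\Phi\l(\|y - y_0\|\r).
\end{equation*}
Combining this with the affine minorant bound $f(y_1) \ge \lang z^*, y_1 \rang + c$ and with the triangle inequality estimates $\|y - y_0\| \le r + \|x - y_0\|$ and $-\lang z^*, y_1 \rang \le \|z^*\|\l(\|x\| + r + \|y - y_1\|\r)$, one obtains
\begin{equation*}
 \rec{\phi(1)}\Phi\l(\|y - y_1\|\r) \le C_1 + C_2 \|y - y_1\|,
\end{equation*}
where $C_1 \as f(y_0) - c + \|z^*\|(\|x\| + r) + \rec{\phi(1)}\Phi(r + \|x - y_0\|)$ and $C_2 \as \|z^*\|$ depend only on $x$, $r$, $y_0$, and the minorant, but not on~$y$.

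\smallskip

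Finally, the second condition in~\eqref{eq:young} yields $\Phi(t)/t \to \infty$ as $t \to \infty$, so the set of $t \ge 0$ satisfying $\Phi(t)/\phi(1) \le C_1 + C_2 t$ is bounded. Picking $R$ to be any upper bound on this set (e.g.\ $R \as \rho_\Phi(\phi(1)(C_1 + C_2 + 1)) + 1$ works, using the auxiliary function from~\eqref{eq:ratedivergence}) yields $\|y - y_1\| \le R$ for all $y \in B(x,r)$, and combined with the initial reduction this finishes the proof. No step is a real obstacle; the only point requiring care is the standard fact that a proper convex lsc function has a continuous affine minorant, which is needed to avoid $f(y_1)$ drifting to $-\infty$ as $y$ varies.
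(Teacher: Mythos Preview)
Your proof is correct and shares the same skeleton as the paper's: reduce to $\lam=1$ via the monotonicity of $\lam\mapsto\l\|y-y_\lam\r\|$ established in Proposition~\ref{prop:goodvarineq}, compare the objective at $y_1$ against its value at a fixed reference point, and then invoke the superlinear growth of~$\Phi$ (via~$\rho_\Phi$) to bound $\l\|y-y_1\r\|$.

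The one genuine difference is how the term $f\l(y_1\r)$ is controlled from below. You appeal to a global continuous affine minorant of~$f$ (a standard Hahn--Banach consequence for proper convex lsc functions) and compare against an arbitrary $y_0\in\dom f$. The paper instead compares against the specific point $x_{\lam_0}=\prox_{\lam_0,f}^{\Phi}(x)$ and uses the subgradient $x^*\in\partial f\l(x_{\lam_0}\r)$ provided by Lemma~\ref{lem:proxchar}, obtaining the explicit inequality $\Phi(R)\le R\,\phi\l(R_0\r)+(r+R_0)\phi\l(R_0\r)+\Phi\l(r+R_0\r)$ with $R_0\as\l\|x-x_{\lam_0}\r\|$. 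Your route is a bit more self-contained in that it does not invoke Lemma~\ref{lem:proxchar}; the paper's route, on the other hand, expresses the bound purely through $R_0$, $r$, $\phi$, $\Phi$ and~$\rho_\Phi$, which matches the paper's emphasis on moduli given in terms of data already present and is in fact reused in Corollary~\ref{cor:hoelder}.
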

\begin{proof}
Choose $y\in B(x,r)$ and $\lam\in(0,1]$ and denote $x_\lam\as\prox_{\lam,f}^{\Phi}(x)$ as well as $y_\lam\as\prox_{\lam,f}^{\Phi}(y).$ Set $\lam_0\as 1.$ Since $\l\|y-y_\lam\r\|\le \l\|y-y_{\lam_0}\r\|$ for each $\lam\in(0,1],$ we are concerned with $\lam_0$ only. Let $R_0\as\l\|x-x_{\lam_0}\r\|$ and $R\as\l\|y-y_{\lam_0}\r\|.$ Lemma~\ref{lem:proxchar} provides us with $x^*\in\partial f\l(x_{\lam_0} \r)$ such that $\l\|x^*\r\|=\frac{\phi\l(R_0\r)}{\phi\l(\lam_0\r)}.$ Then we can estimate
\begin{align*}
 f\l(y_{\lam_0} \r)+\rec{\phi\l(\lam_0\r)}\Phi\l(\l\|y-y_{\lam_0}\r\|\r) &\le f\l(x_{\lam_0} \r)+\rec{\phi\l(\lam_0\r)}\Phi\l(\l\|y-x_{\lam_0}\r\|\r) \\ & \le f\l(y_{\lam_0}\r)+\lang -x^*,y_{\lam_0}-x_{\lam_0} \rang+\rec{\phi\l(\lam_0\r)}\Phi\l(\l\|y-x_{\lam_0} \r\| \r) \\ & \le f\l( y_{\lam_0}\r)+\frac{\phi\l(R_0\r)}{\phi\l(\lam_0\r)}\l(R+r+R_0\r)+\rec{\phi\l(\lam_0\r)}\Phi\l(r+R_0\r).
\end{align*}
Consequently,
\begin{equation} \label{eq:optk1}
 \Phi\l(R\r)  \le R \phi\l(R_0\r)+\l(r+R_0\r)\phi\l(R_0\r)+\Phi\l(r+R_0\r).
\end{equation}
Since the left-hand side has superlinear growth in~$R$ and the right-hand side only linear, there exists a~maximal $R$ for which inequality~\eqref{eq:optk1} holds. To obtain a more explicit formula for this~$R,$ we use the function~$\rho_\Phi$ from~\eqref{eq:ratedivergence}, that is, a~function $\rho_\Phi\col(0,\infty)\to(0,\infty)$ such that 
\begin{equation*}
 \frac{\Phi\l(\rho_{\Phi}(t)\r)}{\rho_{\Phi}(t)} \ge t,
\end{equation*}
for every $t\in(0,\infty).$ Then we can conclude from~\eqref{eq:optk1} that $R$ must satisfy
\begin{equation*}
R\le \max\l\{ 1,\rho_\Phi\l(\phi\l(R_0\r)+\l(r+R_0\r)\phi\l(R_0\r)+\Phi\l(r+R_0\r)\r)\r\},
\end{equation*}
and finishes the proof.
\end{proof}

We are now ready to present our main result. It states that the proximal mapping is uniformly continuous on each bounded subset of~$X$ and provides an explicit modulus of uniform continuity. 
\begin{thm}\label{thm:uc}
 Let $\l(X,\|\cdot\|\r)$ be a uniformly convex Banach space and $f$ be a convex lsc function. Then the proximal mapping defined in~\eqref{eq:pmy} is uniformly continuous on bounded subsets of~$X,$ that is, given $z\in X$ and $r>0,$ the following implication holds true for every $\eps>0:$
 \begin{equation} \label{eq:mainimpl}
 \|x-y\|< \delta(\eps) \implies \l\|\prox_{\lam,f}^{\Phi}(x) -\prox_{\lam,f}^{\Phi}(y) \r\| < \eps,
\end{equation}
for every $x,y\in B(z,r)$ and $\lam\in(0,1],$ where
\begin{equation} \label{eq:delta}
\delta(\eps)\as\min\l\{ \hlf{\eps},\frac{2}{\phi\l(R\r)}
\delta_{R}\l(\frac{\eps}{2} \r) \r\},
\end{equation}
and $R>0$ is a constant independent of $\lam\in(0,1]$ and $\delta_{R}$ is a modulus of uniform convexity of $\Phi\circ\|\cdot\|$ on the ball $B\l(0,R\r).$
\end{thm}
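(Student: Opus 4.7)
The plan is to apply the variational inequality from Proposition~\ref{prop:goodvarineq} symmetrically in $x$ and $y$ and exploit the cancellation of $f$-values when the two inequalities are summed. First I would invoke Lemma~\ref{lem:estim} to fix a constant $R_0>0$ depending only on $z$ and $r$ (but \emph{not} on $\lam$) such that $\|w-\prox_{\lam,f}^{\Phi}(w)\|\le R_0$ for every $w\in B(z,r)$ and every $\lam\in(0,1]$. Setting $R$ to be any value somewhat larger than $R_0+2r$, the triangle inequality shows that each of the four vectors $x-x_\lam,\ x-y_\lam,\ y-x_\lam,\ y-y_\lam$ lies in $B(0,R)$, so $\Phi\circ\|\cdot\|$ is uniformly convex on this ball with modulus $\delta_R$ provided by Proposition~\ref{prop:modulcomp}. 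The fact that $R$ can be chosen uniformly in $\lam\in(0,1]$ is what ultimately yields a $\lam$-independent modulus of uniform continuity.

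Next I would apply~\eqref{eq:varineq2} twice---once at $x$ with test point $y_\lam$, once at $y$ with test point $x_\lam$---where the membership $y_\lam\in B(x,R)$ and $x_\lam\in B(y,R)$ follows from the choice of $R$. Summing the two inequalities, the function values $f(x_\lam)$ and $f(y_\lam)$ cancel, and after multiplying through by $\phi(\lam)$ I arrive at
\begin{equation*}
\bigl[\Phi(\|x-y_\lam\|)-\Phi(\|y-y_\lam\|)\bigr]+\bigl[\Phi(\|y-x_\lam\|)-\Phi(\|x-x_\lam\|)\bigr]\ge 4\,\delta_R(\|x_\lam-y_\lam\|).
\end{equation*}
Each bracket on the left can be bounded by $\phi(R)\|x-y\|$: since $\phi$ is nondecreasing we have $|\Phi(s)-\Phi(t)|\le \phi(R)|s-t|$ for all $s,t\in[0,R]$, and the reverse triangle inequality gives $|\|x-y_\lam\|-\|y-y_\lam\||\le\|x-y\|$, and similarly for the second bracket. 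The net effect is
\begin{equation*}
\delta_R(\|x_\lam-y_\lam\|)\le \tfrac12\phi(R)\,\|x-y\|.
\end{equation*}

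To pass from this inequality to the stated implication~\eqref{eq:mainimpl}, I would argue by contradiction: assume $\|x-y\|<\delta(\eps)$ yet $\|x_\lam-y_\lam\|\ge\eps$. Because $\delta_R$ is a nonoptimal modulus and hence not assumed monotone, I cannot simply conclude $\delta_R(\|x_\lam-y_\lam\|)\ge\delta_R(\eps/2)$; instead, I would re-run the above derivation using the uniform convexity of $h$ with threshold $\eps/2$ in place of $\|x_\lam-y_\lam\|$ (valid because $\|x_\lam-y_\lam\|\ge\eps\ge\eps/2$), invoking the monotonicity of the property $A(\eps,\delta)$ discussed in the introduction. This re-derivation yields $\delta_R(\eps/2)\le\tfrac12\phi(R)\|x-y\|$, i.e., $\|x-y\|\ge 2\delta_R(\eps/2)/\phi(R)\ge \delta(\eps)$, contradicting the hypothesis.

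The main obstacle is twofold. First, one must ensure that the radius $R$, and hence the modulus $\delta_R$, can be chosen uniformly in $\lam\in(0,1]$---this is precisely what Lemma~\ref{lem:estim} achieves and is what distinguishes~\eqref{eq:pmy} from the alternative~\eqref{eq:pmpr}. Second, the nonoptimal modulus $\delta_R$ is not a priori monotone, so one must carefully route the argument through the monotonicity of the uniform convexity property $A(\eps,\delta)$ rather than through properties of $\delta_R$ itself; this is the reason for the appearance of $\eps/2$ inside $\delta_R$ in the formula~\eqref{eq:delta}.
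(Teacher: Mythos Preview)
Your approach is correct and takes a genuinely different route from the paper's. The paper does not use the variational inequality of Proposition~\ref{prop:goodvarineq}; instead it invokes the subdifferential characterization of Lemma~\ref{lem:proxchar} to obtain $x^*\in\frac{1}{\phi(\lam)}J_\phi(x-x_\lam)$ and $y^*\in\frac{1}{\phi(\lam)}J_\phi(y-y_\lam)$, sums the subgradient inequalities to get $\langle x^*-y^*,x_\lam-y_\lam\rangle\ge 0$, and then applies Z\u{a}linescu's Theorem~\ref{thm:uctoum} to the pair $x-x_\lam,\,y-y_\lam$, finally bounding $\|x^*-y^*\|$ by $2\phi(R)/\phi(\lam)$ via the duality mapping. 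Your argument bypasses both the duality mapping and Theorem~\ref{thm:uctoum}, relying only on the symmetric variational inequality and the elementary Lipschitz bound $|\Phi(s)-\Phi(t)|\le\phi(R)|s-t|$ on $[0,R]$; the price is that you must enlarge $R$ to roughly $R_0+2r$ so that the cross terms $x-y_\lam$ and $y-x_\lam$ also lie in $B(0,R)$, whereas the paper can take $R=R_0$ from Lemma~\ref{lem:estim} directly. A small inaccuracy: your stated reason for the $\eps/2$ inside $\delta_R$ (non-monotonicity of the modulus) is not the actual source in the paper. There the $\eps/2$ arises because Theorem~\ref{thm:uctoum} is applied to $(x-x_\lam)-(y-y_\lam)$ rather than to $x_\lam-y_\lam$, and one needs $\|x-y\|<\eps/2$ together with the triangle inequality to guarantee $\|(x-x_\lam)-(y-y_\lam)\|\ge\eps/2$. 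In your approach the threshold $\eps$ could be used directly in $\delta_R$, and the $\min$ with $\eps/2$ is superfluous; your handling of the non-monotonicity is nonetheless valid.
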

\begin{proof}
Choose $z\in X$ and $r>0.$ By virtue of Lemma \ref{lem:estim} there exists an (explicit) $R>0$ such that for every $x\in B(z,r)$ and $\lam\in(0,1]$ we have $\l\|x-\prox_{\lam,f}^{\Phi}(x)\r\|\le R.$  On account of Proposition \ref{prop:modulcomp} we have an explicit modulus of uniform convexity of $\Phi\circ\|\cdot\|$ on the ball $B\l(0,R\r),$ which we denote by $\delta_R.$

Let now $\lam\in(0,1]$ and $x,y\in B(z,r).$ Denote $x_\lam\as\prox_{\lam,f}^{\Phi}(x)$ and $y_\lam\as\prox_{\lam,f}^{\Phi}(y).$ We will proceed by contradiction. Assume that $\l\|x_\lam-y_\lam\r\|\ge\eps$ while $\|x-y\|< \delta(\eps).$ By Lemma \ref{lem:proxchar} we get points $x^*\in \rec{\phi(\lam)}J_\phi\l(x-x_\lam\r)$ and $y^*\in \rec{\phi(\lam)}J_\phi\l(y-y_\lam\r)$ such that 
\begin{align*}
 f\l(y_\lam\r)-f\l(x_\lam\r)\ge\lang x^*,y_\lam-x_\lam\rang,
 \intertext{and,}
 f\l(x_\lam\r)-f\l(y_\lam\r)\ge\lang y^*,x_\lam-y_\lam\rang,
\end{align*}
and after summing up
\begin{equation*}
 0\ge \lang x^*-y^*,y_\lam-x_\lam\rang.
\end{equation*}
Along with 
\begin{equation*}
 \l\|x^*-y^* \r\|\|x-y\| \ge \lang x^*-y^*,x-y\rang,
\end{equation*}
we obtain
\begin{equation*}
 \l\|x^*-y^* \r\|\|x-y\| \ge \lang x^*-y^*,x-x_\lam -y+y_\lam\rang.
\end{equation*}
Since $\|x-y\|<\delta(\eps)\le\hlf{\eps},$ we have $\l\|x-x_\lam -y+y_\lam \r\|\ge\hlf{\eps},$ and Theorem \ref{thm:uctoum} gives
\begin{equation*}
 \l\| x^*-y^* \r\|\|x-y\| \ge \frac{4}{\phi(\lambda)}\delta_{R}\l(\frac{\eps}{2}\r),
\end{equation*}
Since $\l\|x-x_\lam\r\|\le R$ and $\l\|y-y_\lam\r\|\le R,$ we apply the duality mapping and obtain
\begin{equation*}
 \frac{\phi(R)}{\phi(\lam)}\|x-y\| \ge \frac{2}{\phi(\lambda)}\delta_{R}\l(\frac{\eps}{2}\r),
\end{equation*}
which gives a contradiction with the definition of $\delta(\eps).$
\end{proof}

\begin{rem}\label{rem:ucon}
A straightforward modification of the above proof leads to an analogous theorem for the proximal mapping from~\eqref{eq:pmpr}. However, instead of on a modulus $\delta_R$ with $R$ \emph{independent} of $\lam\in(0,1],$ the uniform continuity of $\aprox_{\lam,f}^{\Phi}$ on bounded sets will depend on a modulus $\delta_{\frac{R}{\lam}}$ with $\frac{R}{\lam}$ going to infinity as $\lam\to0.$ More precisely, for the uniform continuity of $\aprox_{\lam,f}^{\Phi}$ we have to replace $\delta(\eps)$ in~\eqref{eq:delta} by
\begin{equation*}
\delta(\eps)\as\min\l\{ \hlf{\eps},\frac{2\lam\delta_{\frac{R}{\lam}}\l(\frac{\eps}{2\lam} \r)}{\phi\l(\frac{R}{\lam}\r)} \r\}.
\end{equation*}
\end{rem}

Theorem \ref{thm:zal} assures the uniform convexity of $\Phi\circ\|\cdot\|$ on bounded sets. We will now ask about uniform convexity on the entire space. Xu~\cite[Theorem 1]{xu} proved the following. 
\begin{thm}[Xu] \label{thm:xu}
 Let $\l(X,\|\cdot \| \r)$ be a Banach space and $p\ge2.$ Then the following are equivalent:
 \begin{enumerate}
  \item The norm $\|\cdot\|$ has modulus of uniform convexity of power type $p.$ \label{i:xu:i}
  \item The function $\|\cdot\|^p$ has modulus of uniform convexity of power type $p.$ \label{i:xu:ii}
  \item The function $\|\cdot\|^p$ is uniformly convex. \label{i:xu:iii}
 \end{enumerate}
\end{thm}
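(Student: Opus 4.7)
The plan is to split into (ii)$\Rightarrow$(iii), (iii)$\Rightarrow$(ii), (ii)$\Rightarrow$(i), and (i)$\Rightarrow$(ii); only the last is substantial. (ii)$\Rightarrow$(iii) is immediate. For (iii)$\Rightarrow$(ii), the function $g\as\|\cdot\|^p$ is $p$-homogeneous, so the substitution $x,y\mapsto\lam x,\lam y$ multiplies $\|x-y\|$ by $\lam$ and $\half g(x)+\half g(y)-g((x+y)/2)$ by $\lam^p.$ This forces the optimal modulus of uniform convexity of $g$ to satisfy $\delta(\lam\eps)=\lam^p\delta(\eps);$ positivity at a single value of $\eps$ (guaranteed by (iii)) then forces $\delta(\eps)=c\eps^p$ with $c>0,$ of power type $p.$ For (ii)$\Rightarrow$(i), if $\half\|x\|^p+\half\|y\|^p-\|(x+y)/2\|^p\ge K\|x-y\|^p$ holds for all $x,y,$ then $x,y\in B_X$ with $\|x-y\|\ge\eps$ yields $\|(x+y)/2\|^p\le 1-K\eps^p;$ the concavity of $s\mapsto(1-s)^{1/p}$ on $[0,1]$ gives $(1-s)^{1/p}\le 1-s/p,$ producing the power-type norm modulus $\delta_X(\eps)\ge K\eps^p/p.$

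For the main direction (i)$\Rightarrow$(ii), assume $\delta_X(\eps)\ge K\eps^p.$ The plan is to decompose
\begin{equation*}
 \half\|x\|^p+\half\|y\|^p-\l\|\hlf{x+y}\r\|^p = A_1+A_2,
\end{equation*}
with
\begin{equation*}
 A_1\as\half\|x\|^p+\half\|y\|^p-\l(\tfrac{\|x\|+\|y\|}{2}\r)^p,\quad A_2\as\l(\tfrac{\|x\|+\|y\|}{2}\r)^p-\l\|\hlf{x+y}\r\|^p,
\end{equation*}
and control the two non-negative summands in complementary regimes. The scalar piece $A_1$ is handled by the $p$-uniform convexity of $t\mapsto t^p$ on $[0,\infty),$ which is available because $p\ge 2$ and gives $A_1\ge c_p\bigl|\|x\|-\|y\|\bigr|^p.$ The vectorial piece $A_2$ is handled via the norm's power-type modulus: normalizing by $r\as\max\{\|x\|,\|y\|\}$ and applying the modulus to $x/r,y/r\in B_X$ produces $\|(x+y)/2\|\le r-K\|x-y\|^p/r^{p-1},$ which on raising to the $p$-th power and using the elementary inequality $(1-u)^p\le 1-u$ for $u\in[0,1]$ yields $\|(x+y)/2\|^p\le r^p-K\|x-y\|^p.$ A case split according to whether $\bigl|\|x\|-\|y\|\bigr|\ge\half\|x-y\|$ or not then allows the two estimates to be patched into a uniform lower bound $c\|x-y\|^p$ for $A_1+A_2,$ with $c$ depending only on $p$ and $K.$

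The main obstacle is the patching in the second regime: the norm bound controls $\|(x+y)/2\|$ in terms of $r^{p-1},$ and although the $p$-th-power structure cancels this factor to give the scale-free estimate $r^p-\|(x+y)/2\|^p\ge K\|x-y\|^p,$ the residual error $r^p-\half\|x\|^p-\half\|y\|^p$ still needs to be dominated by the scalar bound on $A_1.$ Balancing the two across the regimes of small and large $\bigl|\|x\|-\|y\|\bigr|$ relative to $\|x-y\|$ is where the technical care is concentrated, in particular because one must track how the linear error $r^p-\|x\|^p\le pr^{p-1}(r-\|x\|)$ interacts with the $p$-th-power estimate for $A_2.$
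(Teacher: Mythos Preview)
Your treatment of (ii)$\Rightarrow$(iii), (iii)$\Rightarrow$(ii) (homogeneity), and (ii)$\Rightarrow$(i) is fine. The gap is in (i)$\Rightarrow$(ii), and it is not just a matter of ``technical care'': the decomposition with normalization by $r=\max\{\|x\|,\|y\|\}$ cannot be made to close.

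From $\|(x+y)/2\|^p\le r^p-K\|x-y\|^p$ you get
\[
\tfrac12\|x\|^p+\tfrac12\|y\|^p-\l\|\tfrac{x+y}{2}\r\|^p\;\ge\;K\|x-y\|^p-\tfrac12\bigl(r^p-\min\{\|x\|,\|y\|\}^p\bigr),
\]
and you hope the scalar piece $A_1$ absorbs the subtracted error in Case~2. It cannot. Take $X=\ell^p$, $x=e_1$, $y=e_1+te_2$. Then $\|x-y\|=t$, the error equals $\tfrac12\bigl((1+t^p)-1\bigr)=t^p/2$, while for $\ell^p$ one has $K\sim (p\,2^p)^{-1}<\tfrac12$, so your lower bound is $(K-\tfrac12)t^p<0$. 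Meanwhile $\bigl|\|x\|-\|y\|\bigr|\sim t^p/p$, so $A_1\ge c_p\bigl|\|x\|-\|y\|\bigr|^p$ is of order $t^{p^2}$, hopelessly too small to cover an error of order $t^p$. The problem is structural: normalizing both vectors by the same $r$ introduces an error proportional to $r^{p-1}\bigl|\|x\|-\|y\|\bigr|$, and $r$ has no a~priori bound in terms of $\|x-y\|$.

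The paper does not prove Xu's theorem directly (it is cited), but it does derive the implication (i)$\Rightarrow$(ii) quantitatively via the Borwein--Vanderwerff route (Proposition~\ref{prop3.14} and Corollaries~\ref{cor:bv}, \ref{cor:pcase}). The key difference from your attempt is that one normalizes $x$ and $y$ \emph{separately} to the sphere: with $\tilde x=x/\|x\|$, $\tilde y=y/\|y\|$ and, say, $\|x\|\ge\|y\|$, one writes
\[
\l\|\tfrac{x+y}{2}\r\|\le \|y\|\,\l\|\tfrac{\tilde x+\tilde y}{2}\r\|+\tfrac{\|x\|-\|y\|}{2}.
\]
In the regime $\|x\|-\|y\|<\tfrac12\|x-y\|$ one checks $\|\tilde x-\tilde y\|\gtrsim \|x-y\|/\|y\|$, and the power-type modulus applied to $\tilde x,\tilde y$ then gives a gain $\|y\|\cdot K\bigl(\|x-y\|/\|y\|\bigr)^p$; the factor $\|y\|^{1-p}$ is exactly cancelled when one passes from the norm estimate to $\Psi(t)=t^p/p$ via $\Psi'_+(\|y\|/2)\sim\|y\|^{p-1}$. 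Crucially, the additive term $(\|x\|-\|y\|)/2$ already equals the scalar midpoint, so no residual error appears: $A_2$ alone already dominates $c\,\|x-y\|^p$ in this case, and $A_1$ is not needed to absorb anything.
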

Unaware of this theorem, Ball, Carlen and Lieb proved in \cite[Proposition 7]{bcl} its special case, namely, that $\|\cdot\|$ has a modulus of uniform convexity of power type $p$ if and only if there exists a constant $K>0$ such that
\begin{equation*} 
 \l\|\hlf{x+y}\r\|^p \le \half \|x\|^p + \half\|y\|^p -K\|x-y\|^p,
\end{equation*}
for every $x,y\in X.$ 

In \cite[Theorem  2.3]{bghv}, Theorem \ref{thm:xu} was rediscovered once again and the proof, like the one in~\cite{bcl}, relies on a duality between uniform convexity and uniform smoothness.

In \cite{bv}, the above Theorem \ref{thm:xu} was obtained as a corollary of more general theorems on the uniform convexity of the composition of a norm with a convex function. We first quote \cite[Theorem 2.1]{bv} here.
\begin{thm}[Borwein, Vanderwerff] \label{thm:bv}
 Let $\l(X,\|\cdot \| \r)$ be a Banach space and $\Psi\wgt$ be convex nondecreasing. Then the function $\Psi\circ\|\cdot\|$ is uniformly convex if and only if the function $\Psi$ and the norm $\|\cdot\|$ are both uniformly convex while
 \begin{equation} \label{eq:bv}
  \liminf_{t\to\infty} \Psi_+'(t)\delta_X\l(\frac{\eps}{t}\r) t >0,
 \end{equation}
for each $\eps>0.$
\end{thm}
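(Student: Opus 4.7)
My plan is to argue the two implications separately: for necessity, distill each of the three conditions from uniform convexity of $\Psi\circ\|\cdot\|$; for sufficiency, reconstruct a modulus of $\Psi\circ\|\cdot\|$ by a case analysis on whether $\|x\|$ and $\|y\|$ are close.

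For necessity, let $\delta(\cdot)$ be a modulus of $\Psi\circ\|\cdot\|$. Restricting to a line $\{te:t\ge 0\}$ for a unit vector $e\in X$ collapses the defining inequality onto scalars and yields a modulus for $\Psi$ on $[0,\infty)$. For uniform convexity of $\|\cdot\|$, apply $\delta$ to $x,y\in B_X$ with $\|x-y\|\ge\eps$ to get $\Psi(\|(x+y)/2\|)\le\Psi(1)-\delta(\eps)$; since $\Psi$ is strictly increasing (strict convexity combined with monotonicity forces this), $\|(x+y)/2\|\le\Psi^{-1}(\Psi(1)-\delta(\eps))<1$. For \eqref{eq:bv}, fix $t>\eps/2$ and choose $x,y$ with $\|x\|=\|y\|=t$ and $\|x-y\|\ge\eps$ (feasible once $\dim X\ge 2$; the condition is vacuous in one dimension). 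Uniform convexity of $\|\cdot\|$ gives $\|(x+y)/2\|\le t(1-\delta_X(\eps/t))$, so
\begin{equation*}
\delta(\eps)\le\Psi(t)-\Psi(t-t\delta_X(\eps/t))\le t\,\delta_X(\eps/t)\,\Psi_+'(t),
\end{equation*}
the last step being the subgradient bound $\Psi(b)-\Psi(a)\le(b-a)\Psi_+'(b)$.

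For sufficiency, assume all three conditions and fix $x,y\in X$ with $\|x-y\|\ge\eps$; WLOG $s\as\|x\|\le\|y\|\sa t$. In \emph{Case A} ($t-s\ge\eps/2$), uniform convexity of $\Psi$ combined with $\|(x+y)/2\|\le(s+t)/2$ and monotonicity of $\Psi$ directly yields the lower bound $\delta_\Psi(\eps/2)$. In \emph{Case B} ($t-s<\eps/2$), the inequality $\|x-y\|\le s+t$ forces $s\ge\eps/4$. The key geometric step is to ``align'' $x$ with $y$ by setting $z\as(s/t)y$, so that $\|z\|=s$, $\|y-z\|=t-s$, and $\|x-z\|\ge\|x-y\|-(t-s)>\eps/2$. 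Applying uniform convexity of $\|\cdot\|$ to $x/s,z/s\in B_X$ gives $\|(x+z)/2\|\le s(1-\delta_X(\eps/(2s)))$, whence
\begin{equation*}
\l\|\frac{x+y}{2}\r\|\le\l\|\frac{x+z}{2}\r\|+\frac{\|y-z\|}{2}\le\frac{s+t}{2}-\alpha,\quad\text{with }\alpha\as s\delta_X(\eps/(2s)),
\end{equation*}
and the subgradient inequality then yields $\Psi((s+t)/2)-\Psi(\|(x+y)/2\|)\ge\alpha\,\Psi_+'((s+t)/2-\alpha)$.

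The main obstacle is then to bound $\alpha\,\Psi_+'((s+t)/2-\alpha)$ below by a positive quantity depending only on $\eps$. Replacing $\delta_X$ by $\min\{\delta_X,1/2\}$ if necessary, one has $(s+t)/2-\alpha\ge s/2\ge\eps/8$. For large $s$, I invoke \eqref{eq:bv} with $\eps/4$ in place of $\eps$: there is a threshold $U=U(\eps)$ so that $u\Psi_+'(u)\delta_X(\eps/(4u))\ge c(\eps)>0$ for $u\ge U$; substituting $u=s/2$ and using $\Psi_+'((s+t)/2-\alpha)\ge\Psi_+'(s/2)$ (monotonicity of $\Psi_+'$) gives $\alpha\,\Psi_+'((s+t)/2-\alpha)\ge c(\eps)$ for $s\ge 2U$. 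For $s<2U$ the configuration is confined to $B(0,2U+\eps/2)$, and the strict increase of $\Psi_+'$ (a consequence of strict convexity of $\Psi$) gives $\Psi_+'(\eps/8)>0$, while $\alpha\ge(\eps/4)\delta_X(\eps/(4U))$ is bounded below by monotonicity of $\delta_X$. The overall modulus of $\Psi\circ\|\cdot\|$ is then $\min\{\delta_\Psi(\eps/2),\,c(\eps),\,(\eps/4)\delta_X(\eps/(4U))\Psi_+'(\eps/8)\}$, completing the sketch.
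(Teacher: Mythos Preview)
The paper does not prove this theorem itself; it quotes the result from Borwein--Vanderwerff and then, in Proposition~\ref{prop3.14}, carries out only the sufficiency direction quantitatively. Your sufficiency sketch follows that proposition essentially line for line: the same split on whether $\bigl|\,\|x\|-\|y\|\,\bigr|\ge\eps/2$, the same rescaling trick (your $z=(s/t)y$ makes $x/s$ and $z/s$ unit vectors, exactly the paper's $\tilde x,\tilde y$), and the same further subdivision into bounded versus large norm to invoke, respectively, a fixed positive lower bound on $\Psi_+'$ and the $\liminf$ hypothesis. The paper's treatment of necessity is simply the citation.

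Your necessity argument, however, has a direction error in the step deriving~\eqref{eq:bv}. From the upper bound $\|(x+y)/2\|\le t\bigl(1-\delta_X(\eps/t)\bigr)$ you only obtain
\[
\Psi(t)-\Psi\bigl(\|(x+y)/2\|\bigr)\ \ge\ \Psi(t)-\Psi\bigl(t-t\delta_X(\eps/t)\bigr),
\]
and together with $\delta(\eps)\le\Psi(t)-\Psi\bigl(\|(x+y)/2\|\bigr)$ this does \emph{not} give $\delta(\eps)\le\Psi(t)-\Psi\bigl(t-t\delta_X(\eps/t)\bigr)$: both are lower bounds for the same quantity and cannot be chained. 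What is needed is a \emph{lower} bound on $\|(x+y)/2\|$, which forces $\delta_X$ to be the optimal modulus and requires choosing $x,y$ with $\|x\|=\|y\|=t$, $\|x-y\|\ge\eps$ so that $\|(x+y)/2\|$ nearly attains $t\bigl(1-\delta_X(\eps/t)\bigr)$; with that selection (and a limiting argument) your displayed chain becomes valid.
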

By inspecting the original proof from~\cite[Theorem 2.1]{bv} (as well as 
using some estimates from the proof of~\cite[Theorem 2.3]{bv}) we will now extract a modulus of uniform convexity for the function $\Psi\circ\|\cdot\|$ in Theorem~\ref{thm:bv}. To this end, we introduce the following notation. Given $\eps>0,$ denote $K_\eps\ge0$ and $\xi_\eps>0$ such that
\begin{equation} \label{eq:witness}
 \Psi_+'(t)\delta_X\l(\frac{\eps}{t}\r)t\ge\xi_\eps,
\end{equation}
for every $t> \max\l\{ K_\eps,\hlf{\eps}\r\}.$ This is to witness~\eqref{eq:bv}.

\begin{prop} \label{prop3.14}
 Let $\l(X,\|\cdot\|\r)$ be a uniformly convex Banach space with a modulus $\delta_X$ such that $\delta_X(\eps)\le \half$ for $\eps\in (0,1],$\footnote{This can always be achieved by just taking the minimum with $\half.$} and $\Psi\wgt$ be an increasing uniformly convex function with a~modulus of uniform convexity~$\delta_\Psi.$ Let~$K_\eps$ and~$\xi_\eps$ be as in~\eqref{eq:witness} above. Then $\Psi\circ\|\cdot\|$ is uniformly convex with a~modulus
 \begin{equation*}
  \delta_{\Psi\circ\|\cdot\|}(\eps)\as\min\l\{\delta_\Psi\l(\hlf{\eps}\r),\frac{\eps}{4}\delta_X\l( \frac{\eps}{\max\l\{
\hlf{\eps},8K_{\frac{\eps}{8}}\r\}}\r)\Psi_+'\l(\frac{\eps}{4}\r), 
\xi_{\frac{\eps}{8}} \r\},
 \end{equation*}
for each $\eps>0.$ 
\end{prop}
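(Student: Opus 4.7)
The plan is to extract explicit moduli from the nonconstructive proof of Theorem~\ref{thm:bv} by Borwein and Vanderwerff. I would fix $x,y\in X$ with $\|x-y\|\ge\eps$ and, without loss of generality, assume $a:=\|x\|\le b:=\|y\|$; the triangle inequality $a+b\ge\|x-y\|\ge\eps$ gives $b\ge\eps/2$. If $b-a\ge\eps/2$, the uniform convexity of $\Psi$ applied to the scalars $a,b$ yields $\half[\Psi(a)+\Psi(b)]-\Psi((a+b)/2)\ge\delta_\Psi(\eps/2)$. Combined with the monotonicity of $\Psi$ and the triangle inequality $\|(x+y)/2\|\le(a+b)/2$, this produces the first term $\delta_\Psi(\eps/2)$ of the stated modulus.

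Otherwise $b-a<\eps/2$, and the uniform convexity of $X$ must be invoked. Normalizing to $x/b,y/b\in B_X$ with $\|x/b-y/b\|\ge\eps/b$, I would get $\|(x+y)/2\|\le b(1-\delta_X(\eps/b))$. A further split on $b$ is needed. For $b\le 8K_{\eps/8}$, set $\eps_0:=\eps/\max\{\eps/2,8K_{\eps/8}\}$; since $\eps/b\ge\eps_0$, the monotonicity of the property $A(\cdot,\cdot)$ discussed in the introduction lets us replace $\delta_X(\eps/b)$ by $\delta_X(\eps_0)$, giving $\|(x+y)/2\|\le b(1-\delta_X(\eps_0))$. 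The standing hypothesis $\delta_X\le\half$ on $(0,1]$ ensures $b(1-\delta_X(\eps_0))\ge b/2\ge\eps/4$, so $\Psi_+'$ evaluated on $[\eps/4,b]$ is at least $\Psi_+'(\eps/4)$. A convexity mean-value estimate yields $\Psi(b)-\Psi(\|(x+y)/2\|)\ge b\delta_X(\eps_0)\,\Psi_+'(\eps/4)\ge(\eps/2)\delta_X(\eps_0)\,\Psi_+'(\eps/4)$, and using the convexity bound $\Psi((a+b)/2)\le\half[\Psi(a)+\Psi(b)]$ to absorb the possibly negative contribution of $\Psi(a)-\Psi(\|(x+y)/2\|)$ delivers the second term $(\eps/4)\delta_X(\eps_0)\,\Psi_+'(\eps/4)$.

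For $b>8K_{\eps/8}$, the asymptotic condition~\eqref{eq:witness} takes over: since $b>K_{\eps/8}$ and $b\ge\eps/2>\eps/16$, evaluating~\eqref{eq:witness} at $t=b$ and level $\eps/8$ gives $\Psi_+'(b)\,b\,\delta_X(\eps/(8b))\ge\xi_{\eps/8}$. Monotonicity of $A$ again ensures $\|(x+y)/2\|\le b(1-\delta_X(\eps/(8b)))$, and a mean-value step analogous to the previous subcase converts this into $\Psi(b)-\Psi(\|(x+y)/2\|)\ge\xi_{\eps/8}$, producing the third term. The main obstacle will be the careful bookkeeping through the case split—in particular, keeping the argument of $\Psi_+'$ at least $\eps/4$ in the medium-range subcase, handling the possibly negative contribution of $\Psi(a)-\Psi(\|(x+y)/2\|)$ when $a<\|(x+y)/2\|$, and throughout invoking monotonicity of the \emph{property} $A(\cdot,\cdot)$ rather than of $\delta_X$ itself, which as emphasized in the introduction need not be monotone.
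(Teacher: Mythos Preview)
Your overall case split (norms far apart $\to$ use $\delta_\Psi$; norms close $\to$ use $\delta_X$, with a further split on the size of the larger norm) matches the paper's, but the way you implement the uniform-convexity step creates a genuine gap.

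The problem is your normalization. Dividing both vectors by $b$ gives only
\[
c:=\l\|\hlf{x+y}\r\|\le b\bigl(1-\delta_X(\cdot)\bigr),
\]
which compares $c$ to $b$, not to $(a+b)/2$. But everything downstream needs a bound of the form $c\le \hlf{a+b}-\text{(positive)}$. Without it:
\begin{itemize}
\item Your ``absorbing'' step in the medium-range case does not go through. Using convexity only yields $\half\Psi(a)+\half\Psi(b)-\Psi(c)\ge \Psi\bigl(\hlf{a+b}\bigr)-\Psi(c)$, and from $c\le b(1-\delta_X(\eps_0))$ together with $b-a<\eps/2$ one gets $\hlf{a+b}-c>-\eps/4+(\eps/2)\delta_X(\eps_0)$, which is \emph{not} bounded away from $0$ once $\delta_X(\eps_0)\le\half$. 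So the second term of the modulus is not obtained.
\item The argument ``$b(1-\delta_X(\eps_0))\ge b/2\ge\eps/4$, so $\Psi_+'$ evaluated on $[\eps/4,b]$ is at least $\Psi_+'(\eps/4)$'' confuses an upper bound for a lower bound: showing that the \emph{upper} bound on $c$ exceeds $\eps/4$ says nothing about $c$ itself, and there is no control preventing $c$ (hence $\Psi_+'(c)$) from being arbitrarily small.
\item In the large-$b$ case the witness~\eqref{eq:witness} at $t=b$ yields $\Psi_+'(b)$, whereas your mean-value inequality produces $\Psi_+'(c)$ with $c\le b$; since $\Psi_+'$ is nondecreasing this is the wrong direction, and there is no evident way to reconcile the two without monotonicity of $\delta_X$, which you correctly refuse to assume.
\end{itemize}

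The paper avoids all of this by normalizing each vector by its \emph{own} norm, $\tilde x=x/\|x\|$, $\tilde y=y/\|y\|$, and using $|\,\|x\|-\|y\|\,|<\eps/2$ to deduce $\|\tilde x-\tilde y\|>\eps/(2\|y\|)$ (with $\|y\|$ the smaller norm). This gives the crucial inequality
\[
\l\|\hlf{x+y}\r\|\le \hlf{\|x\|+\|y\|}-\|y\|\,\delta_X(\cdot),
\]
so that $\Psi\bigl(\hlf{\|x\|+\|y\|}\bigr)-\Psi(c)$ is bounded below by $\|y\|\,\delta_X(\cdot)\,\Psi_+'(t)$ at a point $t\ge\eps/4$, and the argument closes in both subcases. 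You should rework the normalization in this way; the rest of your outline is sound.
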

\begin{proof}
Let $\eps>0$ and choose $x,y\in X$ such that $\|x-y\|\ge\eps.$ Without loss of generality, suppose $\|x\|\ge\|y\|.$ If $\|x\|-\|y\|\ge\hlf{\eps},$ then 
 \begin{equation*}
  \half\Psi\l(\|x\|\r)+\half\Psi\l(\|y\|\r)-\Psi\l(\l\|\hlf{x+y}\r\|\r)\ge \half\Psi\l(\|x\|\r)+\half\Psi\l(\|y\|\r)-\Psi\l(\hlf{\|x\|+\|y\|}\r)\ge\delta_\Psi\l(\hlf{\eps}\r),
 \end{equation*}
by virtue of the fact that $\Psi$ is increasing.

Let us therefore assume $\|x\|-\|y\|<\hlf{\eps}.$ First observe that
\begin{equation}\label{eq:xny}
\|x\|\ge\hlf{\eps}, \qquad\text{and}\qquad \|y\| \ge \frac{\eps}{4}. 
\end{equation}
Indeed, if it was the case that $\|x\|<\hlf{\eps}$ or $\|y\| < \frac{\eps}{4},$ we would get a contradiction from 
\begin{equation*}
\| x-y\| \le \| x\| +\| y\| \le \left\{
\begin{array}{ll}  2\| x\| <\eps, &\text{if } \| x\| <\hlf{\eps},\\
 \| y\| +\frac{\eps}{2} +\| y\| <\eps, & \text{if } \| y\| <\frac{\eps}4.
 \end{array} \right. 
\end{equation*}
Hence~\eqref{eq:xny} holds true. We now distinguish two cases.

Case 1: Assume $\| y\|\le 2 K_{\frac{\eps}{8}}.$ 
Define  $\tilde{x}\as\frac{x}{\|x\|}$ and $\tilde{y}\as\frac{y}{\|y\|}.$ Then, 
reasoning as in the proof of \cite[Theorem 2.3]{bv}, we obtain $\| y-\| y\| \cdot
\tilde{x}\|>\frac{\eps}{2}$ and so
\begin{equation*}
\| \tilde{y}-\tilde{x}\|>\frac{\eps}{2\| y\|} \ge \frac{\eps}{4K_{\frac{\eps}{8}}}> \frac{\eps}{8K_{\frac{\eps}{8}}}. 
\end{equation*}
By the uniform convexity of $X$ applied to $\tilde{x},\tilde{y},$ we get that
\begin{equation*}
\l\|\hlf{x+y}\r\| \le \|y\| \left\|\hlf{\tilde{x}+\tilde{y}}\right\| +\hlf{\|x\|-\|y\|}\le \half \|x\|+\half \|y\|-\|y\| \cdot \delta_X\l(\frac{\eps}{8K_{\frac{\eps}{8}}}\r).
\end{equation*}
Since $\Psi$ is convex and increasing we get
\begin{align*}
 \half\Psi\l(\|x\|\r)+\half\Psi\l(\|y\|\r)-\Psi\l(\l\|\hlf{x+y}\r\|\r) & \ge \Psi\l(\hlf{\|x\|+\|y\|}\r)-\Psi\l(\l\|\hlf{x+y}\r\|\r) \\ & \ge \Psi\l(\hlf{\|x\|+\| y\|} \r)-\Psi\l(\hlf{\|x\|+\|y\|}-\|y\| \delta_X\l(\frac{\eps}{8K_{\frac{\eps}{8}}}\r) \r)  \\ & \ge \frac\eps4 \delta_X\l(\frac{\eps}{8K_{\frac{\eps}{8}}}\r)\Psi_+'\l(\frac\eps4\r),
\end{align*} 
where we used~\eqref{eq:xny} to obtain the last inequality. Since $\frac{\eps}{8K_{\frac{\eps}{8}}}\le 1$ and therefore $\delta_X\l(\frac{\eps}{8K_{\frac{\eps}{8}}}\r)\le \half,$ we obtain 
\begin{equation*}
\hlf{\|x\| +\|y\|} -\|y\| \delta_X\l(\frac{\eps}{8K_{\frac{\eps}{8}}} \r)\ge \hlf{\|x\| +\|y\|}-\hlf{\|y\|}\ge\frac{\eps}{4},
\end{equation*}
as well as
\begin{equation*}
 \|y\|\delta_X\l(\frac{\eps}{8K_{\frac{\eps}{8}}}\r) \ge \frac{\eps}{4}\delta_X\l(\frac{\eps}{8K_{\frac{\eps}{8}}} \r).
\end{equation*}

Case 2: Assume $\|y\|>2 K_{\frac{\eps}{8}}.$ Reasoning like in Case 1 and using the fact that
\begin{equation*}
\hlf{\|x\| +\|y\|} -\|y\| \delta_X\l(\frac{\eps}{4\|y\|} \r)\ge \hlf{\|x\|}\ge \hlf{\| y\|},
 \end{equation*}
we arrive at
\begin{align*}
 \half\Psi\l(\|x\|\r)+\half\Psi\l(\|y\|\r)-\Psi\l(\l\|\hlf{x+y}\r\|\r) & \ge \Psi\l(\hlf{\|x\| +\|y\|}\r)-\Psi\l(\hlf{\|x\|+\|y\|}-\|y\|\delta_X\l(\frac{\eps}{4\|y\|}  \r)\r) \\ & \ge \|y\| \delta_X\l(\frac{\eps}{4\|y\|}\r)\Psi_+'\l(\hlf{\|y\|}\r) \\ & \ge \hlf{\|y\|} \delta_X\l( \frac{\frac{\eps}{8}}{\hlf{\|y\|}}\r) \Psi_+'\l(\hlf{\|y\|}\r) \\ & \ge \xi_{\frac{\eps}{8}}.
\end{align*}
The proof is complete.
\end{proof}

In the case of power-type uniform convexity, Proposition~\ref{prop3.14} implies the following chain of corollaries. The first of these corollaries (originally proved by Borwein and Vanderwerff in \cite[Theorem 2.3]{bv}) can easily be obtained as a special case of Proposition~\ref{prop3.14}.
\begin{cor} \label{cor:bv}
 Let $\l(X,\|\cdot \| \r)$ be a Banach space and $\Psi\wgt$ be convex nondecreasing. Assume $p\ge2$ and $\Psi$ and $\|\cdot\|$ have moduli of uniform convexity of power type~$p.$ If $\Psi_+'(t)\ge K t^{p-1}$ for some $K>0$ and every $t>0,$ then $\Psi\circ\|\cdot\|$ has a modulus of uniform convexity of power type~$p.$
\end{cor}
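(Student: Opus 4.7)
The plan is to apply Proposition \ref{prop3.14} directly, with the power-type hypotheses plugged into the explicit modulus it produces, and then verify that each of the three terms of the resulting $\delta_{\Psi\circ\|\cdot\|}(\eps)$ scales like~$\eps^p.$

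First I would write $\delta_X(\eps)\as c_1\eps^p$ (valid on $(0,2]$ and extended to all $\eps>0$ by the same formula) and $\delta_\Psi(\eps)\as c_2\eps^p$ for the power-type moduli of $\|\cdot\|$ and $\Psi,$ respectively. Using the hypothesis $\Psi_+'(t)\ge Kt^{p-1},$ one computes, for every $t\ge\eps/2$ (so that $\eps/t\le 2$),
\begin{equation*}
\Psi_+'(t)\,\delta_X(\eps/t)\,t \;\ge\; K t^{p-1}\cdot c_1(\eps/t)^p\cdot t \;=\; Kc_1\eps^p,
\end{equation*}
which is in fact \emph{independent} of $t.$ This allows me to pick $K_\eps\as 0$ and $\xi_\eps\as Kc_1\eps^p$ in \eqref{eq:witness}.

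Next I would substitute these choices into the modulus supplied by Proposition \ref{prop3.14} and check each of the three terms separately. The first term, $\delta_\Psi(\eps/2)=c_2(\eps/2)^p,$ and the third, $\xi_{\eps/8}=Kc_1(\eps/8)^p,$ are manifestly of power type~$p.$ For the middle term, since $K_{\eps/8}=0$ the maximum $\max\{\eps/2,8K_{\eps/8}\}$ collapses to $\eps/2,$ so the argument of $\delta_X$ becomes the \emph{constant}~$2,$ giving $\delta_X(2)=c_1\cdot 2^p.$ Combining this with $\Psi_+'(\eps/4)\ge K(\eps/4)^{p-1}$ shows that the middle term is bounded below by a constant multiple of $\eps^p$ as well. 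Taking the minimum of the three yields a modulus of the form $c\,\eps^p,$ as required.

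There is no real obstacle here — the corollary reduces to substitution and algebra once Proposition \ref{prop3.14} is granted. The only point worth emphasizing is that under power-type moduli the quantity $\Psi_+'(t)\delta_X(\eps/t)t$ is \emph{uniform} in $t,$ and this uniformity is precisely what allows the witness constant $K_\eps$ in \eqref{eq:witness} to be taken as~$0;$ without it the argument of $\delta_X$ appearing in Proposition \ref{prop3.14} could depend badly on $\eps$ and spoil the power-type scaling of the middle term.
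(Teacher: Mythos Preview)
Your approach is correct and essentially the same as the paper's: both plug the power-type moduli into Proposition~\ref{prop3.14}, observe that $\Psi_+'(t)\delta_X(\eps/t)t\ge c_1K\eps^p$ uniformly in~$t$, and take $K_\eps=0$ and $\xi_\eps=c_1K\eps^p$. The only difference is cosmetic: the paper, rather than bounding the middle term of the formula, points out that with $K_{\eps/8}=0$ Case~1 in the proof of Proposition~\ref{prop3.14} is vacuous (since there $\|y\|\ge\eps/4>0=2K_{\eps/8}$), so it drops that term altogether and obtains the two-term modulus $\min\{c_2/2^p,\,c_1K/8^p\}\eps^p$; you instead keep the middle term and bound it below via $\delta_X(2)=c_1 2^p$ and $\Psi_+'(\eps/4)\ge K(\eps/4)^{p-1}$. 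One small omission: Proposition~\ref{prop3.14} assumes $\delta_X(\eps)\le\tfrac12$ on $(0,1]$, so you should note (as the paper does) that one may take $c_1\le\tfrac12$ without loss of generality.
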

\begin{proof} One has a modulus of convexity of the form $\delta_X(\eps)=A\eps^p$, where we may 
assume that $A\le \half$ so that $\delta_X(\eps)\le \half$ for $\eps\in (0,1].$ 
Let $\delta_{\Psi}(\eps)=B\eps^p.$ Then we have
\begin{equation*}
\Psi_+'(t)\delta_X\l(\frac{\eps}{t}\r)t \ge AK\eps^p, \qquad\text{for every } t\ge \hlf{\eps},
\end{equation*}
and hence we may consider~\eqref{eq:witness} with $\xi_{\eps}\as AK\eps^p,$ for every $t\ge\hlf{\eps},$ and we may take $K_{\varepsilon}\as 0.$  Hence, in the proof of Theorem \ref{prop3.14}, `Case 1' cannot eventuate. Thus one obtains 
\begin{equation*}
  \delta_{\Psi\circ\|\cdot\|}(\eps)\as\min\l\{ \frac{B}{2^p},\frac{AK}{8^p}\r\}\eps^p,
 \end{equation*}
for every $\eps>0,$ which concludes the proof.
\end{proof}
One can use Corollary~\ref{cor:bv} to show the implication \eqref{i:xu:i}$\implies$\eqref{i:xu:ii} of Xu's theorem (Theorem \ref{thm:xu} above). This was observed by Borwein and Vanderwerff in \cite[Corollary 2.4]{bv}. Corollary~\ref{cor:bv} also provides a~modulus of uniform convexity of the function $x\mapsto\rec{p}\|x\|^p,$ that is, a quantitative version of the implication \eqref{i:xu:i}$\implies$\eqref{i:xu:ii} of Xu's theorem.
\begin{cor} \label{cor:pcase}
Suppose that $\l(X,\|\cdot \| \r)$ has a modulus of uniform convexity $\delta_X(\eps)= A\eps^p,$ for some $A\in\l(0,\half\r)$ and $p\ge2$ and every $\eps\in(0,2].$ Then the function $x\mapsto\rec{p}\|x\|^p$ has a modulus of uniform convexity
\begin{equation*}
 \delta_{\rec{p}\|\cdot\|^p}(\eps)\as\frac{A}{8^p}\eps^p,
\end{equation*}
for every $\eps>0.$
\end{cor}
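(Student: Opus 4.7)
The plan is to deduce this corollary from Corollary~\ref{cor:bv} applied to $\Psi(t)\as\rec{p}t^p,$ which is convex and nondecreasing on $[0,\infty)$ with right-derivative $\Psi_+'(t)=t^{p-1};$ the growth hypothesis is thus satisfied with $K\as1.$ The only nontrivial input I need to supply is a modulus of uniform convexity of power type $p$ for $\Psi$ itself.

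For that, I would appeal to the one-dimensional Clarkson inequality, which asserts that for every $p\ge2$ and $\alpha,\beta\ge0,$
\begin{equation*}
\l(\frac{\alpha+\beta}{2}\r)^p+\l(\frac{|\alpha-\beta|}{2}\r)^p\le\frac{\alpha^p+\beta^p}{2}.
\end{equation*}
Dividing through by $p$ immediately yields $\delta_\Psi(\eps)\as\rec{p\cdot 2^p}\eps^p$ as a modulus of uniform convexity for $\Psi,$ so Corollary~\ref{cor:bv} applies with $B\as\rec{p\cdot 2^p}$ and $K\as1.$

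Tracking constants through the proof of Corollary~\ref{cor:bv} (in which only `Case~2' of Proposition~\ref{prop3.14} is effective, since one may take $K_{\eps/8}=0$), the resulting modulus of uniform convexity of $\Psi\circ\|\cdot\|$ takes the form
\begin{equation*}
\min\l\{\frac{B}{2^p},\frac{AK}{8^p}\r\}\eps^p=\min\l\{\rec{p\cdot 4^p},\frac{A}{8^p}\r\}\eps^p.
\end{equation*}
A short arithmetic comparison then closes the argument: since $p\ge2$ implies $2^p/p\ge2,$ one has $\rec{p\cdot 4^p}=\frac{2^p}{p\cdot 8^p}\ge\frac{2}{8^p},$ whereas the assumption $A<\half$ forces $\frac{A}{8^p}<\frac{1}{2\cdot 8^p}<\frac{2}{8^p}.$ Hence the minimum equals $\frac{A}{8^p},$ exactly the claimed modulus. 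The only mildly delicate step is pinning down the constant $B$ for $\Psi;$ once the one-dimensional Clarkson inequality is in hand, the rest is a bookkeeping exercise.
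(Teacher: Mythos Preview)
Your proof is correct and follows the same overall route as the paper: both arguments deduce the result from Corollary~\ref{cor:bv} applied to $\Psi(t)=\rec{p}t^p$ with $K=1$, leaving only the determination of a constant $B$ such that $\delta_\Psi(\eps)=B\eps^p$, and then a comparison of $\frac{B}{2^p}$ with $\frac{A}{8^p}$.

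The single point of divergence is how $B$ is obtained. The paper invokes a result of Z\u{a}linescu \cite[Proposition 3.2]{zalinescu83}, giving $B=p^{-2}2^{-(p^2-2p)/(p-1)}$, and then has to unwind this expression to check that $\frac{B}{2^p}\ge\frac{A}{8^p}$. You instead appeal to the one-dimensional Clarkson inequality, which yields the cleaner constant $B=\rec{p\cdot 2^p}$ and makes the final comparison (via $2^p/p\ge2$ and $A<\half$) immediate. Your choice is slightly more elementary and self-contained, at no cost; otherwise the two proofs coincide.
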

\begin{proof}
By a result of Z{\u{a}}linescu from \cite[Proposition 3.2]{zalinescu83}, we can take
 \begin{equation*}
  \delta_{\rec{p}|\cdot|^p}(\eps)\as\frac{\eps^p}{p^2 2^{\frac{p^2-2p}{p-1}}},
 \end{equation*}
for every $\eps>0,$ as a modulus of uniform convexity. Then in the proof of Corollary~\ref{cor:bv} one can put $K=1$ and $B=p^{-2} 2^{-\frac{p^2-2p}{p-1}}.$ Since
\begin{equation*}
 \frac{B}{2^p}=\rec{p^2 2^{\frac{2p^2-3p}{p-1}}}\ge\rec{p^2 2^{\frac{2p^2-2p}{p-1}}}\ge \rec{8^p}\ge \frac{A}{8^p},
\end{equation*}
we obtain the desired modulus in Corollary~\ref{cor:bv}.
\end{proof}

With Corollary~\ref{cor:pcase} at hand, we are able to obtain a more concrete version of Theorem~\ref{thm:uc} in the case when $\Phi(t)=\rec{p}t^p$ for some $p\ge2.$ Note that if $\|\cdot\|$ is a uniformly convex norm and $p\in(1,2),$ then~$\|\cdot\|^p$ is uniformly convex on bounded sets, but not on the whole space. 
\begin{cor} \label{cor:hoelder}
 Let $\l(X,\|\cdot\|\r)$ be a Banach space with a modulus of uniform convexity $\delta_X(\eps)\as A\eps^p,$ for each $\eps\in (0,2],$ with some constants $A\in\l(0,\half\r)$ and $p\ge2.$ Let $\Phi(t)\as\rec{p}t^p.$ Then, given $z\in X$ and $r>0,$ there exists a~constant $L>0$ such that
 \begin{equation*}
  \l\|\prox_{\lam,f}^\Phi(x) -\prox_{\lam,f}^\Phi(y) \r\| \le \max\l\{2\|x-y\|,  L\|x-y\|^\rec{p} \r\},
\end{equation*}
for every $x,y\in B(z,r)$ and $\lam\in(0,1].$ If $r\ge\max\l\{1,\l\|z-\prox_{\lam_0,f}^\Phi(z) \r\|\r\},$ where $\lam_0\as1,$ we can set $L\as16r\l(\frac{3p+2^p}{2A}\r)^{\frac1p}.$
\end{cor}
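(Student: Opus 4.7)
The plan is to specialize Theorem~\ref{thm:uc} to $\Phi(t)=t^p/p$ (so $\phi(t)=t^{p-1}$) and to invert the resulting modulus $\delta(\eps)$ explicitly in $\|x-y\|.$ Two concrete ingredients are needed: an upper bound for the constant $R$ of Lemma~\ref{lem:estim} in terms of $r,$ and an explicit modulus of uniform convexity of $\Phi\circ\|\cdot\|=\rec{p}\|\cdot\|^p$ on $B(0,R).$

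For the modulus I would invoke Corollary~\ref{cor:pcase}, which provides $\delta_{\rec{p}\|\cdot\|^p}(\eps)=\frac{A}{8^p}\eps^p$ on all of $X$ and so, a fortiori, on $B(0,R).$ For the bound on $R,$ I would apply Lemma~\ref{lem:estim} with center $z$ and $\lam_0=1,$ using the explicit witness $\rho_\Phi(t)=(pt)^{1/(p-1)}$ of~\eqref{eq:ratedivergence}. Under the hypothesis $r\ge\max\{1,\|z-\prox_{\lam_0,f}^\Phi(z)\|\},$ one has $R_0\le r$ and $1\le r,$ so
\begin{equation*}
\phi(R_0)\le r^p,\quad (r+R_0)\phi(R_0)\le 2r^p,\quad \Phi(r+R_0)\le\frac{2^p r^p}{p};
\end{equation*}
these three contributions sum to at most $\tfrac{3p+2^p}{p}r^p,$ and applying $\rho_\Phi$ gives $R^{p-1}\le (3p+2^p)\,r^p.$

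Plugging $\phi(R)=R^{p-1}$ and $\delta_R(\eps/2)\ge\tfrac{A\eps^p}{16^p}$ into formula~\eqref{eq:delta} from Theorem~\ref{thm:uc} yields
\begin{equation*}
\delta(\eps)\ge\min\l\{\hlf{\eps},\,\frac{2A\eps^p}{R^{p-1}\,16^p}\r\}.
\end{equation*}
Given $x,y\in B(z,r)$ with $d\as\|x-y\|>0,$ whenever $\eps$ exceeds both $2d$ and $(R^{p-1}\,16^p\,d/(2A))^{1/p}$ one has $d<\delta(\eps),$ so Theorem~\ref{thm:uc} gives $\|\prox_{\lam,f}^\Phi(x)-\prox_{\lam,f}^\Phi(y)\|<\eps.$ Passing to the infimum over such $\eps$ yields the claimed H\"older-type inequality with $L=16\l(R^{p-1}/(2A)\r)^{1/p},$ and the bound $R^{p-1}\le(3p+2^p)r^p$ then gives $L\le 16r\l(\frac{3p+2^p}{2A}\r)^{1/p}.$

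There is no conceptual obstacle: the argument is a direct assembly of Theorem~\ref{thm:uc}, Corollary~\ref{cor:pcase}, and Lemma~\ref{lem:estim}. The only care required is the constant bookkeeping that makes the three summands for $R^{p-1}$ collapse cleanly into $(3p+2^p)r^p$ under the normalization $r\ge\max\{1,R_0\},$ which is what forces the precise form $\tfrac{3p+2^p}{2A}$ of the final constant.
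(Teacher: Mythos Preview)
Your proposal is correct and follows essentially the same route as the paper: specialize Theorem~\ref{thm:uc} using the global modulus $\delta_{\frac1p\|\cdot\|^p}(\eps)=\frac{A}{8^p}\eps^p$ from Corollary~\ref{cor:pcase}, then invert the resulting $\delta(\eps)=\min\{\eps/2,\,2A\eps^p/(16^pR^{p-1})\}$ in $\|x-y\|$, and finally bound $R$ via~\eqref{eq:optk1}. Your treatment of the constant is in fact more explicit than the paper's, which merely asserts that ``the radius $R$ can be estimated from~\eqref{eq:optk1}''; your three-term estimate (using $r\ge1$ to absorb $r^{p-1}$ into $r^p$) together with $\rho_\Phi(t)=(pt)^{1/(p-1)}$ cleanly recovers $R^{p-1}\le(3p+2^p)r^p$ and hence the stated value of~$L$.
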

\begin{proof}
If $\phi(t)=t^{p-1},$ then the modulus $\delta(\eps)$ from Theorem~\ref{thm:uc} is, on account of Corollary~\ref{cor:pcase}, equal to
\begin{equation} \label{eq:modl2}
 \delta(\eps)=\min\l\{\hlf{\eps},\frac{2A}{16^pR^{p-1}} \eps^p\r\},\qquad \eps>0,
\end{equation}
for some $R>0.$

Let $x,y\in B(z,r)$ with $\|x-y\|>0.$ By the continuity of the modulus~$\delta$ from~\eqref{eq:modl2}, we can find $\eps>0$ such that $\|x-y\|=\delta(\eps).$ Invoking that~$\delta$ is strictly increasing, we observe that implication~\eqref{eq:mainimpl} in Theorem~\ref{thm:uc} holds with non-strict inequalities as well and therefore we get, for each $\lam\in(0,1],$
\begin{equation*}
 \l\|\prox_{\lam,f}^\Phi(x) -\prox_{\lam,f}^\Phi(y) \r\| \le \max\l\{2\|x-y\|,  L\|x-y\|^\rec{p} \r\},
\end{equation*}
where $L\as16\l(\frac{R^{p-1}}{2A} \r)^\rec{p}$ and the constant~$R$ is independent of~$\lam.$
 
Furthermore, if $r\ge\max\l\{1,\l\|z-\prox_{\lam_0,f}^\Phi(z) \r\|\r\},$ the radius $R$ can be estimated from~\eqref{eq:optk1}, and one can therefore set $L\as16r\l(\frac{3p+2^p}{2A}\r)^{\frac1p}.$ 
\end{proof}

Next we turn to a renorming theorem from \cite[Theorem 2.4]{bghv}.
\begin{thm} \label{thm:renorming}
 Let $X$ be a Banach space. The following condition are equivalent.
 \begin{enumerate}
  \item There exists a continuous uniformly convex function defined on~$B_X.$ \label{i:renorm:i}
  \item There exists an equivalent uniformly convex norm on~$X.$ \label{i:renorm:ii}
  \item There exist an equivalent norm $|\cdot|$ on~$X$ and $p\ge2$ such that the function $f\as|\cdot|^p$ is uniformly convex. \label{i:renorm:iii}
 \end{enumerate}
\end{thm}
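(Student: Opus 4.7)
The plan is to establish the cycle via (iii)$\Leftrightarrow$(ii), (ii)$\Rightarrow$(i), and (i)$\Rightarrow$(ii), the last being the substantive content. The equivalence (ii)$\Leftrightarrow$(iii) is a direct combination of Pisier's theorem from Section~\ref{sec:pre} with Xu's theorem (Theorem~\ref{thm:xu}): Pisier lifts any equivalent uniformly convex norm to one with modulus of uniform convexity of power type $p$ for some $p\ge 2,$ and Xu then translates this into uniform convexity of $|\cdot|^p,$ while the converse is a direct application of Xu. For (ii)$\Rightarrow$(i), pass through~(iii): the $p$-th power of a norm equivalent to $\|\cdot\|$ is continuous and uniformly convex on all of $X,$ so its restriction to $B_X$ witnesses~(i).

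For the main implication (i)$\Rightarrow$(ii), I would construct the equivalent uniformly convex norm as the Minkowski functional of a small sublevel set. Starting from $f\col B_X\to\rls$ continuous and uniformly convex with modulus $\delta_f,$ symmetrize by setting $g(x)\as f(x)+f(-x)-2f(0),$ which is continuous, even, nonnegative, vanishes at $0,$ and uniformly convex with modulus $2\delta_f$ (as a sum of two uniformly convex functions). Applying the uniform convexity of $g$ to the pair $(x,-x)$ yields the growth estimate $g(x)\ge 2\delta_f(2\|x\|)$ on $B_X.$ Consequently, for $\lambda>0$ small enough the sublevel set $C_\lambda\as\{x\in B_X\col g(x)\le\lambda\}$ lies inside a ball $B(0,r_0)$ with $r_0<1,$ while continuity of $g$ at $0$ forces $B(0,r_1)\subseteq C_\lambda$ for some $r_1>0.$ Hence $C_\lambda$ is a closed bounded symmetric convex body with nonempty interior, and its Minkowski functional $N$ is an equivalent norm on $X.$

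The key step is to verify that $N$ is uniformly convex. Given $\eps>0$ and $x,y$ with $N(x),N(y)\le 1$ (so $x,y\in C_\lambda$) and $N(x-y)\ge\eps$ (so $\|x-y\|\ge r_1\eps$), uniform convexity of $g$ yields
\begin{equation*}
g\l(\hlf{x+y}\r)\le\lambda-2\delta_f(r_1\eps)\sa\lambda-\eta.
\end{equation*}
The main obstacle is to convert this value improvement into an actual radial contraction $N(\hlf{x+y})\le 1-\delta,$ equivalently $g(\hlf{x+y}/(1-\delta))\le\lambda,$ since sublevel sets of convex functions do not scale. I would close this gap via local Lipschitz continuity of $g$ inside $B_X$: a Baire category argument applied to $B_X=\bigcup_n\{g\le n\}$ together with evenness and convexity produces a ball $B(0,\tilde\rho)$ with $\tilde\rho<1$ on which $g$ is bounded, and the classical convexity-to-Lipschitz estimate then gives a Lipschitz constant $L$ for $g$ on $B(0,\tilde\rho/2).$ Choosing $\lambda$ additionally small so that $r_0<\tilde\rho/2,$ the linear bound
\begin{equation*}
g\l(\frac{z}{1-\delta}\r)\le g(z)+L\,\|z\|\,\frac{\delta}{1-\delta},\qquad z\as\hlf{x+y}\in C_\lambda,
\end{equation*}
is at most $\lambda$ as soon as $\delta\le\eta/(Lr_0+\eta),$ producing the desired uniform convexity modulus for $N$ in terms of $\eps$ and completing the argument.
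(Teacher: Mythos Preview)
Your argument is correct and follows essentially the same route as the paper's: symmetrize, take a small sublevel set as the new unit ball, and convert the value drop coming from uniform convexity into a radial contraction via a local Lipschitz estimate on the function. The only differences are cosmetic---the paper uses continuity of $f$ at $0$ directly to obtain local boundedness (so the Baire category detour is unnecessary), and it phrases the final step as comparing $f$ at $\hlf{x+y}$ with $f$ at the radial projection of $\hlf{x+y}$ onto the new unit sphere rather than scaling outward by $1/(1-\delta)$.
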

\begin{proof}
Let us outline the proof.

 \eqref{i:renorm:i}$\implies$\eqref{i:renorm:ii}: See \cite[Theorem 2.4]{bghv}, or the proof of Theorem~\ref{thm:qrenorming} below.
 
 \eqref{i:renorm:ii}$\implies$\eqref{i:renorm:iii}: By Pisier's renorming theorem~\cite{pisier}, mentioned above in Section~\ref{sec:pre}, there exists an equivalent norm $|\cdot|$ with modulus of uniform convexity of power type~$p,$ for some $p\ge2.$ Then Xu's result \cite[Theorem 1]{xu}, stated above as Theorem~\ref{thm:xu}, yields that $f\as|\cdot|^p$ is uniformly convex.
 
 \eqref{i:renorm:iii}$\implies$\eqref{i:renorm:i}: Trivial.
\end{proof}

To complete our quantitative analysis, we provide an explicit modulus of uniform convexity of the new norm in Theorem \ref{thm:renorming}\eqref{i:renorm:ii}. This is achieved by a straightforward modification of the original proof from~\cite{bghv}.
\begin{thm} \label{thm:qrenorming}
 Let $(X,\|\cdot\|)$ be a Banach space and $f\col B_X\to\exrls$ be a function which is uniformly convex on~$B_X$ with a modulus of uniform convexity $\delta_{f,B_X}$ and which is continuous at~$0$ with a modulus of continuity $\omega_{f,0}.$ Then there exist a constant $M>0$ and an equivalent norm $\nnorm{\cdot}$ on~$X$ which makes $\l(X,\nnorm{\cdot}\r)$ uniformly convex with a modulus of uniform convexity
 \begin{equation} \label{eq:moducnn}
  \delta_X(\eps)\as \frac{\omega_{f,0}(M)}{4M\alpha}\delta_{f,B_X}(\beta\eps), \qquad \eps\in (0,2],
 \end{equation}
and satisfying
\begin{equation} \label{eq:distortion}
 \rec{\alpha}\|x\| \le \nnorm{x} \le \rec{\beta}\|x\|,
\end{equation}
for every $x\in X.$ Here $\alpha\as\hlf{\omega_{f,0}(M)}$ and $\beta\as \omega_{f,0}\l(\delta_{f,B_X}(\alpha)\r).$
\end{thm}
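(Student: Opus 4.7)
The plan is to adapt and quantify the renorming construction in~\cite[Theorem~2.4]{bghv}: the new norm $\nnorm{\cdot}$ will be defined as the Minkowski gauge of a convex symmetric subset $C\subset X$ carved out of a sublevel set of~$f$. First I would symmetrize and normalize~$f$: replace $f$ by $\tilde f(x)\as\tfrac12(f(x)+f(-x))-f(0)$, which is an even, nonnegative, convex function on~$B_X$ with $\tilde f(0)=0$, and whose moduli of uniform convexity and of continuity at~$0$ are at least as good as those of~$f$ (the symmetrisation only costs a factor~$\tfrac12$ in $\delta_{f,B_X}$). I then assume WLOG that $f$ is even with $f(0)=0$; applying uniform convexity of~$f$ to the chord from $x$ to $-x$ (whose midpoint is~$0$) yields the pointwise lower bound
\[
f(x)\ge \delta_{f,B_X}(2\|x\|),\qquad x\in B_X.
\]

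With $\alpha\as\omega_{f,0}(M)/2$ and $\beta\as\omega_{f,0}(\delta_{f,B_X}(\alpha))$, set $N\as\delta_{f,B_X}(\alpha)$ and
\[
C\as\{x\in B_X\col f(x)\le N\}.
\]
Continuity of~$f$ at~$0$ gives $B(0,\beta)\subset C$, while the pointwise lower bound above combined with (the monotone version of)~$\delta_{f,B_X}$ forces $C\subset B(0,\alpha)$. The set $C$ is closed, convex, symmetric, and has~$0$ in its interior. Let $\nnorm{\cdot}$ be the Minkowski gauge of~$C$; the sandwich $B(0,\beta)\subset C\subset B(0,\alpha)$ yields the distortion bounds~\eqref{eq:distortion} at once.

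For uniform convexity, fix $u,v$ with $\nnorm{u},\nnorm{v}\le1$ and $\nnorm{u-v}\ge\eps$. Then $u,v\in\ol{C}$, so $\|u\|,\|v\|\le\alpha$ and $f(u),f(v)\le N$; and the distortion bound $\nnorm{\cdot}\le\|\cdot\|/\beta$ gives $\|u-v\|\ge\beta\eps$. Uniform convexity of~$f$ on~$B_X$ applied at the midpoint $w\as(u+v)/2$ yields
\[
f(w)\le N-\delta_{f,B_X}(\beta\eps).
\]
The main obstacle is now to convert this $f$-gap into an estimate $\nnorm{w}\le 1-\delta_X(\eps)$ with $\delta_X$ as in~\eqref{eq:moducnn}: I would combine the pointwise lower bound $f(w)\ge\delta_{f,B_X}(2\|w\|)$ (which together with the $f$-gap forces $\|w\|$ strictly below $\alpha/2$ by an amount quantifiable in~$\eps$) with the \emph{a priori} upper bound $f(w/(1-\tau))\le M$ provided by continuity of~$f$ at~$0$ whenever $\|w/(1-\tau)\|\le 2\alpha=\omega_{f,0}(M)$, a regime available for $\tau\in[0,\tfrac12]$ since $\|w\|\le\alpha$. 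Balancing the uniform-convexity gap at~$w$ against the absolute $M$-bound at the expanded point is the technical heart of the argument; it produces the prefactor $\frac{\omega_{f,0}(M)}{4M\alpha}=\frac{1}{2M}$ appearing in~\eqref{eq:moducnn}, while the passage $\nnorm{u-v}\ge\eps\implies\|u-v\|\ge\beta\eps$ explains the appearance of $\beta\eps$ inside $\delta_{f,B_X}$.
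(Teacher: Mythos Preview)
Your overall architecture matches the paper exactly: symmetrize and normalize~$f$, set $N\as\delta_{f,B_X}(\alpha)$, take $C\as\{x\in B_X:f(x)\le N\}$, prove the sandwich $B(0,\beta)\subset C\subset B(0,\alpha)$, let $\nnorm{\cdot}$ be the Minkowski gauge of~$C$, and then pass from $\nnorm{u-v}\ge\eps$ to $\|u-v\|\ge\beta\eps$ and apply uniform convexity of~$f$ to get $f(w)\le N-\delta_{f,B_X}(\beta\eps)$ at the midpoint. All of this is fine and is precisely what the paper does.

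The gap is in the final conversion step. Your two proposed ingredients---the lower bound $f(w)\ge\delta_{f,B_X}(2\|w\|)$ and the upper bound $f(w/(1-\tau))\le M$---do not combine to yield $\nnorm{w}\le 1-\tau$ with the stated~$\tau$. The first ingredient only tells you that $\|w\|$ is somewhat smaller than $\alpha/2$, but since $C$ is in general \emph{not} a $\|\cdot\|$-ball, a bound on $\|w\|$ gives no useful bound on $\nnorm{w}$ beyond the crude distortion estimate. The second ingredient is too weak: to conclude $\nnorm{w}\le 1-\tau$ you must show $f(w/(1-\tau))\le N$, and knowing only $f(w/(1-\tau))\le M$ is a long way from that. ``Balancing'' two upper bounds on $f$ at two points does not produce a sharper upper bound at either one.

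What the paper actually does here is different. From the bound $|f|\le M$ on $B(0,2\alpha)$ and convexity, it extracts that $f$ is Lipschitz on $B(0,\alpha)$ with constant $L=4M/\omega_{f,0}(M)=2M/\alpha$. This has two consequences: first, $f$ is continuous on $B(0,\alpha)\supset C$, so $f\equiv N$ on the $\nnorm{\cdot}$-unit sphere; second, for $z\as w/\nnorm{w}$ one gets
\[
\delta_{f,B_X}(\beta\eps)\le N-f(w)=f(z)-f(w)\le L\,\|z-w\|\le L\,\alpha\,\nnorm{z-w}=2M\bigl(1-\nnorm{w}\bigr),
\]
which is exactly the claimed modulus. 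An equivalent route that stays closer to your intuition is a three-point convexity interpolation \emph{along the ray through~$w$}: with $s_0\as2$ one has $\|s_0w\|\le2\alpha$ and hence $f(s_0w)\le M$, and for $1<s<2$ convexity gives $f(sw)\le(2-s)f(w)+(s-1)f(s_0w)\le(2-s)(N-\delta_{f,B_X}(\beta\eps))+(s-1)M$, from which one reads off the range of $s$ with $f(sw)\le N$. The key point is that the $M$-bound must be placed at a point \emph{beyond} $w/(1-\tau)$, not at $w/(1-\tau)$ itself, so that convexity lets you interpolate; and the lower bound $f(w)\ge\delta_{f,B_X}(2\|w\|)$ plays no role at all.
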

\begin{proof}
Without loss of generality, we may assume that~$f$ is symmetric (upon replacing it with $x\mapsto\half\l(f(x)+f(-x)\r)$) and that $f(0)=0.$

The continuity of~$f$ at~$0$ gives $f(x)<\eps$ for every $x\in B\l(0,\omega_{f,0}(\eps)\r)$ and $\eps>0.$ Choose $M>0$ such that $\omega_{f,0}(M)<1.$ Then $f$ is on $B\l(0,\omega_{f,0}(M) \r)$ bounded by~$M.$ Define $\alpha\as\hlf{\omega_{f,0}(M)}$ and $\beta\as \omega_{f,0}\l(\delta_{f,B_X}(\alpha)\r).$ It is well known (see for instance \cite[Proposition 1.6]{phelps}) that $f$ is Lipschitz around~$0.$ More precisely, it is Lipschitz on $B\l(0,\hlf{\omega_{f,0}(M)} \r)$ with Lipschitz constant $\frac{4M}{\omega_{f,0}(M)}.$ Indeed, choose $x,y\in B(0,\alpha),$ $x\not= y,$ and 
denote for a moment $\gamma\as\|x-y\|$ and $z\as y+\frac{\alpha}{\gamma}(y-x).$ Then $z\in B\l(0,\omega_{f,0}(M)\r)$ and 
\begin{equation*}
 y=\frac{\gamma}{\gamma+\alpha}z+\frac{\alpha}{\gamma+\alpha}x.
\end{equation*}
By convexity,
\begin{equation*}
 f(y)-f(x)\le \frac{\gamma}{\gamma+\alpha}\l[f(z)-f(x)\r]\le\frac{\gamma}{\alpha}2M=\frac{4M}{\omega_{f,0}(M)}\|x-y\|.
\end{equation*}
Interchanging~$x$ and~$y$ yields the desired Lipschitz property.

Define
\begin{equation*}
 B\as\l\{x\in B_X\col f(x)\le \delta_{f,B_X}(\alpha)\r\}
\end{equation*}
and observe that
\begin{equation} \label{eq:nestedballs}
 B(0,\beta)\subset B \subset B(0,\alpha).
\end{equation}
Indeed, the first inclusion in~\eqref{eq:nestedballs} follows from the very definition of $\omega_{f,0}.$ And since for $u\in B_X$ with $\|u\|>\alpha$ we have
\begin{equation} \label{eq:incl}
 f(u)\ge 2\l[\half f(u)+\half f(0)- f\l(\hlf{u}\r) \r]\ge2\delta_{f,B_X}(\alpha)>\delta_{f,B_X}(\alpha),
\end{equation}
we obtain the second inclusion in~\eqref{eq:nestedballs}, too. 

Let us now define a new norm $\nnorm{x}\as\inf\l\{t>0\col x\in tB \r\},$ for $x\in X.$ By virtue of~\eqref{eq:nestedballs} we have
\begin{equation*}
 \rec{\alpha}\|x\|\le\nnorm{x}\le\rec{\beta}\|x\|,
\end{equation*}
for each $x\in X,$ which gives~\eqref{eq:distortion}.

Next choose $x,y\in X$ such that $\nnorm{x},\nnorm{y}\le1$ and assume $\nnorm{x-y}\ge\eps$ for some $\eps\in(0,2].$ Then $\|x-y\|\ge\beta\eps$ and hence
\begin{equation*}
 \delta_{f,B_X}(\beta\eps) \le \half f(x)+\half f(y)- f\l(\hlf{x+y}\r) \le \delta_{f,B_X}(\alpha)-f\l(\hlf{x+y}\r) .
\end{equation*}
Since~\eqref{eq:incl} holds also for $u\in X$ with $\|u\|=\alpha,$ and since $f$ is Lipschitz on $B(0,\alpha),$ we conclude that $f(v)=\delta_{f,B_X}(\alpha)$ whenever $\nnorm{v}=1.$ Therefore,
\begin{equation*}
 \delta_{f,B_X}(\alpha)-f\l(\hlf{x+y}\r)=f\l(\frac{x+y}{\nnorm{x+y}}\r)-f\l(\hlf{x+y}\r).
\end{equation*}
Recalling again the Lipschitz property of~$f$ yields
\begin{equation*}
 \delta_{f,B_X}(\beta\eps)\le \nnorm{\frac{x+y}{\nnorm{x+y}}-\hlf{x+y}} \frac{4M}{\omega_{f,0}(M)}\alpha= \l( 1-\nnorm{\hlf{x+y}}\r)\frac{4M}{\omega_{f,0}(M)}\alpha.
\end{equation*}
Hence
\begin{equation*}
   \delta_X(\eps)\as \frac{\omega_{f,0}(M)}{4M\alpha}\delta_{f,B_X}(\beta\eps),
\end{equation*}
is a modulus of uniform convexity of $\l(X,\nnorm{\cdot}\r),$ which gives the remaining property~\eqref{eq:moducnn} and the proof is complete.
\end{proof}

In their recent paper~\cite{gjy}, Gonzalo, Jaramillo and Y\'{a}\~{n}ez showed that a polynomial norm has a power-type modulus of uniform convexity. We now state their result, and since the proof in \cite[Proposition 4]{gjy} is slightly inaccurate (namely, their set $\mathcal{C}_N$ does contain the zero polynomial), we present a corrected proof. Let us first introduce polynomial norms. Let $\l(X,\|\cdot\|\r)$ be a Banach space and $N\in\nat$ an even integer. If there exists a continuous symmetric $N$-linear form $A\col X^N\to\rls$ such that $\|x\|^N=A(x,\dots,x)$ for each $x\in X,$ we say that $\|\cdot\|$ is a \emph{polynomial norm.} The diagonal of~$A,$ that is $x\mapsto P(x)\as A(x,\dots,x)$ is called an $N$-homogeneous polynomial.
\begin{prop}[Gonzalo, Jaramillo, Y\'{a}\~{n}ez]
 The norm $\|\cdot\|$ has a modulus of uniform convexity of power type~$N.$
\end{prop}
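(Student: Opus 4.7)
My plan is to apply Xu's theorem (Theorem~\ref{thm:xu}, the implication \eqref{i:xu:ii}$\Rightarrow$\eqref{i:xu:i}), which reduces the task to showing that $P:=\|\cdot\|^N$ has a modulus of uniform convexity of power type $N$, i.e.\ there exists $C>0$ with $\hlf{P(x)}+\hlf{P(y)}-P\l(\hlf{x+y}\r)\ge C\|x-y\|^N$ for all $x,y\in X$. First I would verify strict convexity of $\|\cdot\|$: if $\|x\|=\|y\|=\l\|\hlf{x+y}\r\|=1$ with $x\neq y$, then the polynomial $\phi(t):=\|(1-t)x+ty\|^N$, of degree $N$ in $t$, would equal $1$ on $[0,1]$ and hence on all of $\rls$, contradicting the leading behavior $\|y-x\|^N t^N\to\infty$ as $|t|\to\infty$.

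Since the above inequality depends only on $x$, $y$ and the restriction of $\|\cdot\|$ to $V:=\operatorname{span}\{x,y\}$, the problem reduces to obtaining a uniform $C>0$ over all two-dimensional restrictions. For each such $V$ I would fix an Auerbach basis of $(V,\|\cdot\|)$, giving a linear identification $V\cong\rls^2$ under which $|v|_\infty\le\|v\|\le 2|v|_\infty$. The transported polynomial $\tilde P$ then lies in the finite-dimensional space $\mathcal{P}_N(\rls^2)$ of $N$-homogeneous polynomials on $\rls^2$ and satisfies $2^{-N/2}\le\tilde P(v)\le 2^N$ whenever $|v|_2=1$. Setting
\begin{equation*}
\mathcal{C}_N:=\l\{\tilde P\in\mathcal{P}_N(\rls^2):\tilde P\text{ convex},\ 2^{-N/2}\le\tilde P(v)\le 2^N\text{ for all }|v|_2=1\r\},
\end{equation*}
we obtain a compact subset of $\mathcal{P}_N(\rls^2)$ that contains every transported restriction of $P$; by the explicit positive lower bound every element of $\mathcal{C}_N$ is a polynomial norm (to the $N$-th power) and the zero polynomial is excluded---this is the precise point that has to be corrected in \cite{gjy}.

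For $\tilde P\in\mathcal{C}_N$ define
\begin{equation*}
c(\tilde P):=\inf_{x\ne y}\frac{\hlf{\tilde P(x)}+\hlf{\tilde P(y)}-\tilde P(\hlf{x+y})}{\tilde P(x-y)}.
\end{equation*}
By homogeneity the infimum may be taken over a compact subset of $\rls^2\times\rls^2$. Strict convexity of $\tilde P$ (obtained by composing the strictly convex $\tilde P^{1/N}$ with the strictly increasing convex map $t\mapsto t^N$) ensures positivity of the ratio off the diagonal $x=y$, and on the diagonal the polarization expansion
\begin{equation*}
\hlf{\tilde P(m+d)}+\hlf{\tilde P(m-d)}-\tilde P(m)=\sum_{j=1}^{N/2}\binom{N}{2j}A(m^{N-2j},d^{2j}),
\end{equation*}
with $m:=\hlf{x+y}$ and $d:=\hlf{x-y}$, guarantees a positive lower bound for the limiting ratio---the leading term $\binom{N}{2}A(m^{N-2},d^2)=O(\|d\|^2)$ makes the ratio blow up when nonzero, while the top term $\tilde P(d)>0$ serves as a fallback when the lower-order terms vanish. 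Hence $c(\tilde P)>0$ for each $\tilde P\in\mathcal{C}_N$, and a standard lower-semicontinuity argument on the compact set $\mathcal{C}_N$ yields a uniform $C:=\inf_{\tilde P\in\mathcal{C}_N}c(\tilde P)>0$; Xu's theorem then delivers the desired power-type-$N$ modulus of uniform convexity for $\|\cdot\|$. The main obstacle is this compactness step---ensuring $\mathcal{C}_N$ is both compact and excludes the zero polynomial---which the Auerbach normalization resolves.
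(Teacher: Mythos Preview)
Your route is workable but substantially heavier than the paper's. The paper does a one-variable reduction: after invoking Xu's theorem in the form \eqref{i:xu:iii}$\Rightarrow$\eqref{i:xu:i} (so only uniform convexity of $P=\|\cdot\|^N$ is needed, not power type directly), it sets $z:=\hlf{x+y}$ and $h:=\frac{x-y}{2t}$, so that $\|h\|=\half$ and $\half P(x)+\half P(y)-P\l(\hlf{x+y}\r)=\half p_{z,h}(t)$ with $p_{z,h}(t):=P(z+th)+P(z-th)-2P(z)$. This $p_{z,h}$ is an even, nonnegative, convex polynomial in the single real variable $t$ whose leading coefficient equals $2\|h\|^N=2^{1-N}$ \emph{regardless of $z,h$}. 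Hence all the $p_{z,h}$ lie in the set $W$ of such univariate polynomials with that fixed leading coefficient, and one only needs $\inf_{p\in W}p(t_0)>0$ for each $t_0>0$: if $p_n(t_0)\to0$ then convexity together with $p_n(0)=0$ forces $p_n\to0$ on $[0,t_0]$, whence all coefficients tend to $0$, contradicting $a_N=2^{1-N}$. No Auerbach bases, no space of two-variable polynomials; the normalization $\|h\|=\half$ is exactly the device that excludes the zero polynomial from~$W$ and constitutes the correction of~\cite{gjy}.

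Your two-dimensional compactness scheme has a genuine slip at the end: $c(\tilde P)=\inf_{x\ne y}F(\tilde P,x,y)$ is an infimum of functions continuous in $\tilde P$, so $c$ is \emph{upper} semicontinuous, and upper semicontinuity on the compact $\mathcal{C}_N$ does not yield $\inf_{\mathcal{C}_N}c>0$. The repair is to work jointly on the product: normalize, say, $|x|_2^2+|y|_2^2=1$, observe that the ratio extends to a lower semicontinuous $(0,+\infty]$-valued function on the compact set $\mathcal{C}_N\times\{|x|_2^2+|y|_2^2=1\}$ (positivity along the diagonal coming from your polarization expansion and a uniform-over-$\mathcal{C}_N$ positive lower bound on the Hessian form $A(m^{N-2},\hat d^{\,2})$), and conclude that the joint infimum is positive. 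This is doable but noticeably more laborious than the paper's one-variable argument.
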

\begin{proof}
By Xu's theorem (Theorem~\ref{thm:xu} above), we need to show that
\begin{equation*}
 \inf\l\{\half P(x)+\half P(y)-P\l(\hlf{x+y} \r)\col x,y\in X,\|x-y\|=t \r\}>0,
\end{equation*}
for each $t>0.$ Using a substitution $z\as\hlf{x+y}$ and $h\as\frac{x-y}{2t},$ the above inequality is equivalent to
\begin{equation} \label{eq:infpoly}
 \inf\l\{P(z+th)+P(z-th)-2P(z)\col z,h\in X,\|h\|=\half \r\}>0.
\end{equation}
Given $z,h\in X$ such that $\|h\|=\half,$ denote
\begin{equation*}
 p_{z,h}(t)\as P(z+th)+P(z-th)-2P(z),\qquad t\in\rls,
\end{equation*}
which is a polynomial belonging to the set
\begin{equation*}
 W\as\l\{p(t)=a_Nt^N+a_{N-2}t^{N-2}+\cdots+a_2t^2\col p\ge0, p \text{ convex },a_N=\rec{2^{N-1}} \r\},
\end{equation*}
since the leading term of $p_{z,h}$ is $A(h,\ldots,h)t^N+A(h,\ldots,h)t^N$ which is equal to $2\| h\|^Nt^N.$ 
Given $t_0>0,$ we have
\begin{equation*}
 p_{z,h}\l(t_0\r)\ge \inf \l\{p\l(t_0\r)\col p\in W \r\}>0,
\end{equation*}
for every $z,h\in X$ such that $\|h\|=\half,$ which implies that~\eqref{eq:infpoly} holds true.
\end{proof}


\bibliographystyle{siam}
\bibliography{prox}

\end{document}